\begin{document}

\title{Matching Correlated Inhomogeneous \\ Random Graphs using the $k$-core Estimator} 

%%%%%%
\author{%
  \IEEEauthorblockN{Mikl\'{o}s Z. R\'{a}cz}
  \IEEEauthorblockA{%
   Department of Statistics and Data Science \\
   Department of Computer Science \\
    Northwestern University \\
    Evanston, IL \\
    miklos.racz@northwestern.edu}
    \and 
    \IEEEauthorblockN{Anirudh Sridhar}
    \IEEEauthorblockA{%
    Department of Electrical and Computer Engineering \\
    Princeton University \\
    Princeton, NJ \\
    anirudhs@princeton.edu}
    }

\maketitle

%%%%%
%% Abstract: 
%% If your paper is eligible for the student paper award, please add
%% the comment "THIS PAPER IS ELIGIBLE FOR THE STUDENT PAPER
%% AWARD." as a first line in the abstract. 
%% For the final version of the accepted paper, please do not forget
%% to remove this comment!
%%
\begin{abstract}
We consider the task of estimating the latent vertex correspondence between two edge-correlated random graphs with generic, inhomogeneous structure. We study the so-called \emph{$k$-core estimator}, which outputs a vertex correspondence that induces a large, common subgraph of both graphs which has minimum degree at least $k$. We derive sufficient conditions under which the $k$-core estimator exactly or partially recovers the latent vertex correspondence. Finally, we specialize our general framework to derive new results on exact and partial recovery in correlated stochastic block models, correlated Chung-Lu graphs, and correlated random geometric graphs.
\end{abstract}

\section{Introduction}

In the past decade there has been a strong and growing interest in understanding the fundamental limits of graph matching, both information-theoretically~\cite{pedarsani2011privacy,cullina2016improved,cullina2018exact,cullina2020partial,wu2022settling, ganassali2021impossibility,ding2022matching, hall2020partial, onaran2016optimal,RS21,RS22,shirani2021concentration} and algorithmically~\cite{yartseva2013percolation,shirani2017seeded,barak2019,ding2021efficient,fan2020spectral, mossel2019seeded, ganassali2020tree,
mao2021random, mao2021exact, mao2022otter}, 
leading to several recent breakthroughs. 
Most of the focus thus far has been on the simplest setting of correlated Erd\H{o}s--R\'enyi random graphs, 
with only a few works studying models beyond this, 
such as correlated stochastic block models~\cite{onaran2016optimal,RS21,GRS22}, correlated randomly growing graphs~\cite{korula2014efficient,RS22}, correlated Chung--Lu graphs~\cite{yu2021power, chiasserini2016social, bringmann2014heterogeneous}, and correlated geometric models~\cite{wang2022geometric}.

\subsection{Contributions}

In this work, we initiate a systematic development of techniques for matching correlated networks with general, inhomogeneous structure. 
The matching procedure we study, known as the \emph{$k$-core estimator}, produces a (possibly partial) matching of two graphs, $G_1$ and $G_2$, that induces a large, common subgraph of minimum degree at least~$k$.
Building on the analyses of~\cite{cullina2020partial, GRS22}, we show that, for an appropriately chosen value of $k$ based on the density of the graphs, the $k$-core estimator is guaranteed to produce a \emph{fully correct} (possibly partial) matching of the two graphs. 

We then derive general conditions under which the $k$-core estimator fully or partially recovers the latent matching. 
For both objectives, we show that the success of the $k$-core estimator depends on the \emph{minimum expected degree} of the intersection graph of $G_1$ and $G_2$.
In particular, exact recovery of the matching is guaranteed when the minimum expected degree is larger than $\log n$, where $n$ is the number of vertices in the graph.
This condition is known to be the information-theoretic threshold for exact graph matching in Erd\H{o}s--R\'{e}nyi graphs~\cite{cullina2016improved,cullina2018exact} and stochastic block models with two balanced communities~\cite{RS21}; we conjecture that it may be information-theoretically optimal for a much broader range of models as well.
For the goal of partial recovery, we show that the $k$-core estimator succeeds when the minimum expected degree of the intersection graph is larger than a function of the \emph{inhomogeneity} of the graphs.

Finally, we apply our general results to several well-studied generative graph models of interest.
Specifically, our work provides the first theoretical analysis of seedless graph matching for general stochastic block models, Chung-Lu models, and noisy random geometric graphs. 
These results highlight the power of the $k$-core estimator as a useful, model-agnostic procedure for understanding graph matching in more realistic network models.

\subsection{Related work}

In recent years, there have been significant developments in the study of information-theoretic characterizations of graph matching. 
In the context of correlated Erd\H{o}s--R\'{e}nyi graphs, the information-theoretic conditions for the possiblity and impossibility of exact graph matching were derived in~\cite{cullina2016improved, cullina2018exact, wu2022settling}. 
Several works have also addressed information-theoretic characterizations of \emph{partial} graph matching (i.e., where the goal is to correctly recover any positive fraction of the ground-truth matching)~\cite{cullina2020partial, ding2022matching, wu2022settling, hall2020partial, ganassali2020tree}, with impossibility results established in \cite{wu2022settling, ganassali2021impossibility} and matching achievability results very recently determined by Ding and Du~\cite{ding2022matching}.
Moreover, we remark that the $k$-core estimator we use for graph matching in this paper is closely related to the \emph{dense subgraph estimator} used in~\cite{ding2022matching}, which is information-theoretically optimal for partial recovery. See Remark \ref{remark:densest_subgraph} for more details on this point. We also remark that while our paper, as well as the above literature, largely focuses on information-theoretic conditions for graph matching, a fascinating related area of research is to design \emph{efficient algorithms} that operate in the achievability region (see, e.g., \cite{ganassali2020tree, barak2019, ding2021efficient, fan2020spectral, mao2021exact, mao2021random, mao2022otter}).

To the best of our knowledge, little is known of the information-theoretic limits of graph matching for networks beyond Erd\H{o}s--R\'{e}nyi graphs. 
A model of correlated stochastic block models was first studied by Onaran, Garg, and Erkip~\cite{onaran2016optimal}, and R\'{a}cz and Sridhar~\cite{RS21} later determined the information-theoretic limits of exact recovery in this setting.
Recently, Wang, Wu, Xu, and Yolou~\cite{wang2022geometric} studied exact and almost exact graph matching of two complete graphs with edge weights corresponding to latent geometric structure.
A common thread of these works is that the statistic used to recover the matching (e.g., the maximum a posteriori (MAP) estimator), as well as the methods of analysis, are highly sensitive to the type of network model considered.
In contrast, our work provides a \emph{model-agnostic} toolkit for deriving achievability results for exact and partial graph matching.

We comment on a related but different line of work on graph matching with side information in the form of many correctly matched ``seeds" that are known a priori.
In this setting, there are numerous efficient algorithms with provable guarantees for exactly recovering the latent matching in correlated Erd\H{o}s-R\'{e}nyi graphs~\cite{mossel2019seeded, pedarsani2011privacy, yartseva2013percolation, shirani2017seeded, kazemi2015growing}, correlated Chung-Lu graphs~\cite{yu2021power, chiasserini2016social, bringmann2014heterogeneous} and correlated preferential attachment graphs~\cite{korula2014efficient}.
Our work, on the other hand, studies what can be learned \emph{without} such side information.

\subsection{Outline}

The rest of this paper is organized as follows: 
we first present the models we study in Section~\ref{sec:models} 
and subsequently detail our results in Section~\ref{sec:results}. 
The proofs can be found in Section~\ref{sec:proofs}, with some proofs deferred to the appendices. 
We conclude with a brief discussion in Section~\ref{sec:discussion}.

\subsection{Notation}

We represent a graph as $G = (V,E)$, where $V$ is the vertex set and $E$ is the set of edges. For a vertex $v \in V$, we let $\deg_{G}(v)$ denote its degree in $G$. 
We let $\core_k(G)$ denote the $k$-core of $G$, which is the maximal subgraph with minimum degree $k$. For a set $S \subseteq V$, $G \{ S \}$ represents the induced subgraph of $G$ corresponding to $S$.
%For a function $g: A \to B$ and a set $S \subseteq A$, we denote $g\{S \} : = \{ (x, g(x)): x \in S \}$.
For random variables $X$ and $Y$ we write $X \preceq Y$ to denote that $X$ is stochastically dominated by $Y$.
Throughout the paper, we use standard asymptotic notation (e.g., $O(\cdot), o(\cdot)$).

\section{Correlated inhomogeneous random graphs}\label{sec:models}

We start by describing a general model of inhomogeneous random graphs. 

\begin{definition}[Inhomogeneous random graphs]\label{def:irg}
Let $n$ be a positive integer, and let $\mathbf{p} : = \{ p_{ij} \}_{i,j \in [n] }$ be a collection of elements in $[0,1]$ representing edge probabilities. We construct $G \sim \inhomogeneous(\mathbf{p} )$ by adding the edge $(i,j)$ to 
%the graph 
$G$ 
with probability $p_{ij}$, independently across all pairs of elements of $[n]$.
\end{definition}

This is a very well-studied general model (see, e.g., the book~\cite{frieze2016introduction}). 
Many well-known probabilistic generative models for networks can be viewed as special cases of the inhomogeneous random graph model described above. 
For instance, if $p_{ij} = p$ for all distinct $i,j \in [n]$, we recover the Erd\H{o}s-R\'{e}nyi 
% random 
graph---perhaps the most basic generative network model. 

In more complex models, the $p_{ij}$'s may be induced by \emph{latent structure} which can dramatically shape the topology of the resulting random graph. 
That is, to each vertex $i \in [n]$ there is an associated latent variable (e.g., community membership, weight, latent position), 
often drawn at random from some distribution, 
and \emph{conditioned on these latent variables} 
the graph is an inhomogeneous random graph as in Definition~\ref{def:irg}, 
with the $p_{ij}$'s being a function of the latent variables. 

In what follows we describe three commonly studied models of this type. 
Our general results (see Theorems~\ref{thm:exact_recovery} and~\ref{thm:partial_matching}) apply to inhomogeneous random graphs as in Definition~\ref{def:irg}; 
consequently, they apply to the specific models \emph{conditioned on the latent variables}. 
Because of this, we present these models already conditioned on the latent variables, without specifying the distribution over the latent variables. 
However, 
as we shall see, the conditions of these theorems are \emph{simple}, 
which means that it is not difficult 
to understand when they hold with high probability over the distribution of the latent variables. We leave these details 
to 
the reader.

\begin{example}
In stochastic block models (SBMs)~\cite{Abbe_survey,HLL83}, the $p_{ij}$'s are induced by latent community structure. 
Specifically, suppose that there are $m$ communities in the network, represented by a partition $V_1, \ldots, V_m$ of $[n]$. 
We also specify a collection $\mathbf{q} = \{ q_{ab} \}_{a,b \in [m]}$ of $[0,1]$-valued elements representing edge formation probabilities within and across communities. We say that $G \sim \sbm(V_1, \ldots, V_m, \mathbf{q})$ if $p_{ij} = q_{ab}$ for distinct $i,j \in [n]$ whenever $i \in V_a$ and $j \in V_b$.
\end{example}

\begin{example}
In the Chung-Lu model~\cite{chung2002average}, the $p_{ij}$'s are induced by latent vertex weights. Let $\mathbf{w} \in \mathbb{R}_+^n$ be a collection of vertex weights satisfying $\max_{i \in [n]} w_i \le \sqrt{ \sum_{i \in [n]} w_i}$. We say that $G \sim \cl( \mathbf{w})$ if $p_{ij} = w_i w_j / ( \sum_{k \in [n]} w_k )$ for distinct $i,j \in [n]$.
\end{example}

\begin{example}
Consider a noisy random geometric graph (see~\cite{liu2021phase} and the references therein), constructed as follows. Let $d$ be a positive integer (which could be constant or increasing with $n$), and let $\mathbf{x} := \{ x_1, \ldots, x_n \}$ be 
elements of the $d$-dimensional sphere $\mathbb{S}^{d-1}$. 
Given parameters $r \in [0,2]$ and $p \in [0,1]$, we say that $G \sim \rg( \mathbf{x}, r, p)$ if 
\begin{equation}
\label{eq:rg_probability} 
p_{ij} = p \mathbf{1}( \| x_i - x_j \|_2 \le r )
\end{equation}
for distinct $i,j \in [n]$.
\end{example}

We next define correlated inhomogeneous random graphs. 

\begin{definition}[Correlated inhomogeneous random graphs]
Let $n$ be a positive integer, let $\mathbf{p}$ be a collection of edge probabilities, and let $s \in [0,1]$ be a correlation parameter. A pair $(G_1, G_2)$ of correlated inhomogeneous random graphs are generated according to the following process. First, a parent $G \sim \inhomogeneous ( \mathbf{p} )$ is generated. Conditioned on $G$, a pair of graphs $(G_1, G_2')$ with the same vertex sets as $G$ are independently constructed by subsampling each edge in $G$ with probability~$s$. Finally, a uniform random permutation $\pi_*$ is applied to the vertex labels of $G_2'$ to generate $G_2$.
For brevity, we say that $(G_1, G_2) \sim \correlated(\mathbf{p}, s)$ if the pair $(G_1, G_2)$ has been constructed in this manner.
\end{definition}

Correspondingly, we may also construct correlated stochastic block models, Chung-Lu graphs, and noisy random geometric graphs. We refer to these distributions as $\csbm, \ccl$ and $\crg$, respectively.

\section{Results: graph matching via the $k$-core estimator}\label{sec:results}

In this section, we 
present our main 
results establishing sufficient conditions for the correctness of the $k$-core estimator for correlated inhomogeneous random graphs.

\subsection{Correctness of the $k$-core estimator}

Let us start with some definitions.
\begin{definition}[Matching]
A pair $(M, \mu)$ is a \emph{matching} between $G_1$ and $G_2$ if $M \subseteq [n]$, $\mu: M \to [n]$, and $\mu$ is injective.
\end{definition}

We write $G_1 \land_{\mu} G_2$ to be the \emph{intersection graph} with respect to the matching $(M,\mu)$. Specifically, $G_1 \land_{\mu} G_2$ has a vertex set equal to $M$ and $(i,j)$ is an edge in $G_1 \land_{\mu} G_2$ if and only if $(i,j)$ is an edge in $G_1$ and $(\mu(i), \mu(j))$ is an edge in $G_2$.

\begin{definition}[$k$-core matching]
A matching $(M, \mu)$ is a \emph{$k$-core matching} if the minimum degree in $G_1 \land_{\mu} G_2$ is at least~$k$. 
%A matching $(M, \mu)$ is called a \emph{maximal $k$-core matching} if it involves the greatest number of vertices among all $k$-core matchings. 
\end{definition}

This terminology comes from the notion of a \emph{$k$-core}: the $k$-core of a graph is the maximal subgraph with minimum degree $k$. 

\begin{definition}[$k$-core estimator]
Given a pair of graphs $(G_{1},G_{2})$ on $n$ vertices and $k \in [n]$, 
the \emph{$k$-core estimator} $(\widehat{M}_{k},\widehat{\mu}_{k})$ 
is a $k$-core matching that involves the greatest number of vertices among all $k$-core matchings. (If this is not uniquely defined, pick an arbitrary such matching.)
\end{definition}

The following lemma provides a general and simple sufficient condition under which the $k$-core estimator succeeds with high probability. More precisely, the conclusion of the lemma guarantees that the $k$-core estimator exactly recovers the latent matching on the $k$-core of the intersection graph, making no errors (and it makes no matches outside of this $k$-core). 

Before stating the result, we define, for a matching $(M, \mu)$ and $S \subseteq M$, the set $\mu \{ S \} : = \{ (i, \mu(i) ) : i \in S \}$.

\begin{lemma}
\label{lemma:k_core_correctness} 
Let $(G_1, G_2) \sim \correlated( \mathbf{p} , s)$.  
Suppose that $p_{\max} \le n^{- \alpha + o(1)}$ for some $\alpha \in (1/2, 1]$ and that $k > 12/ ( 2 \alpha - 1)$. Then, with probability $1 - o(1)$, it holds that
\[
\widehat{M}_k = \core_k ( G_1 \land_{\pi_*} G_2 ) \text{ and } \widehat{\mu}_k \{ \widehat{M}_k \} = \pi_* \{ \widehat{M}_k \}.
\]
\end{lemma}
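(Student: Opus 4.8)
The goal is to show two things simultaneously, with high probability: (a) the true matching restricted to the $k$-core of the intersection graph $H := G_1 \wedge_{\pi_*} G_2$ is a valid $k$-core matching (this is essentially by definition of the $k$-core, since $\pi_*$ is an isomorphism onto a subgraph of $G_2$), and (b) no $k$-core matching can do strictly better — in particular, there is no $k$-core matching that matches more vertices, and any maximum $k$-core matching must coincide with $\pi_*$ on $\core_k(H)$. So the real content is a union-bound argument ruling out "bad" $k$-core matchings, i.e. matchings that make at least one error yet still induce a subgraph of minimum degree $\geq k$.

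The plan is the standard first-moment / enumeration argument, following \cite{cullina2020partial, GRS22}. Suppose $(M,\mu)$ is a $k$-core matching that differs from $\pi_*$ somewhere on $M$. Let me sketch the bad event more carefully. Since $\mu \neq \pi_*$ on some vertex, consider the set of "bad" vertices where $\mu$ and $\pi_*$ disagree; call its size $\ell \geq 1$ (one should also handle vertices matched by $\mu$ but lying outside $\core_k(H)$, or matched to the "wrong" partner). The intersection graph $G_1 \wedge_\mu G_2$ restricted to the bad vertices, or more precisely to a connected component containing a bad vertex, must have minimum degree $\geq k$; but because the matching is wrong there, each edge of this subgraph that is incident to a bad vertex is a "coincidental" edge — present in $G_1$ at the pair $(i,j)$ \emph{and} present in $G_2$ at the permuted pair $(\mu(i),\mu(j))$, where at least one of the two endpoints is mismatched so $(\mu(i),\mu(j)) \neq (\pi_*(i),\pi_*(j))$. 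Such edges are not forced by the parent graph $G$ and appear only with probability at most $p_{\max} s^2 \leq p_{\max} \le n^{-\alpha + o(1)}$, independently over distinct "fresh" pairs. So a bad $k$-core matching forces the existence of a subgraph on roughly $t$ vertices (for some $t \geq k+1$, since minimum degree $\geq k$ needs at least $k+1$ vertices) with at least $\sim kt/2$ coincidental edges. Enumerating: there are at most $\binom{n}{t}$ choices of vertex set, at most $t!$ (or a crude $n^t$) choices of how $\mu$ acts on it, at most $\binom{\binom{t}{2}}{kt/2}$ choices of which pairs are the coincidental edges, and each such configuration has probability at most $(p_{\max})^{kt/2} \le n^{(-\alpha+o(1)) kt/2}$. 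Multiplying and summing over $t$ from $k+1$ to $n$, the per-$t$ term is bounded by something like $\exp\big( t \log n \cdot ( 2 + o(1) - \alpha k / 2) \big)$ after using $\binom{n}{t} \le n^t$, $t! \le n^t$, and $\binom{\binom t2}{kt/2} \le 2^{t^2/2} \le n^{t \cdot o(\log n)}$... — here one has to be a little careful, since $t$ can be as large as $n$ and $2^{t^2/2}$ is not $n^{O(t)}$. This is exactly the place where the sparsity hypothesis $p_{\max} \leq n^{-\alpha+o(1)}$ with $\alpha > 1/2$ is used: in the dense-in-$t$ regime one instead bounds the number of edge-subsets by $\binom{t^2}{kt/2} \le (et/k)^{kt/2} \cdot \text{poly}$, i.e. roughly $t^{kt/2}$, and pairs each edge-probability factor $p_{\max} \le n^{-\alpha}$ against it; the product of $t^{kt/2}$ with $n^{-\alpha k t /2}$ is controlled because $t \le n$ gives $t^{kt/2} \le n^{kt/2}$, so one needs $n^{kt/2} \cdot n^{-\alpha k t/2} \cdot n^{(2+o(1))t} = n^{t( (1-\alpha)k/2 + 2 + o(1))}$ to be summably small — but wait, that wants $k > 4/(1-\alpha)$, not quite the stated bound. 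The correct accounting, which I would follow from \cite{GRS22}, splits into the regime $t = O(\log n)$ (where $2^{t^2}$ is negligible and the naive bound gives the condition $\alpha k/2 > 2$, i.e. $k > 4/\alpha$) and the regime $t = \Omega(\log n)$ (where one must also subtract off the "automatically present" structure and carefully count coincidental edges, yielding the constant $12/(2\alpha-1)$). I expect the constant $12$ and the $2\alpha - 1$ denominator to come out of optimizing these two regimes and the entropy terms against each other.

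The key steps, in order: (1) define the bad event precisely as "there exists a $k$-core matching disagreeing with $\pi_*$ on $\core_k(H)$, or matching a vertex outside $\core_k(H)$"; (2) show such a matching forces a connected subgraph on $t \ge k+1$ vertices, all of whose edges incident to mismatched vertices are coincidental, hence at least $\Omega(kt)$ coincidental edges total; (3) for fixed $t$, union-bound over vertex sets, over the partial permutation $\mu$ on them, and over the set of coincidental edges, using that coincidental edges beyond those inherited from the parent appear with probability $\le p_{\max} s^2 \le p_{\max}$ independently; (4) split the resulting sum over $t$ into small-$t$ and large-$t$ regimes and check that $k > 12/(2\alpha - 1)$ makes both geometrically summable and $o(1)$; (5) conclude that with probability $1-o(1)$ every maximum $k$-core matching agrees with $\pi_*$ on $\core_k(H)$ and matches no vertex outside it, which is precisely the stated conclusion since $\core_k(H)$ is itself realized by the valid matching $\pi_*$.

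The main obstacle is Step (2)–(4): correctly counting the coincidental edges. The subtlety is that mismatched vertices and correctly-matched vertices can be interleaved in the bad subgraph, so an edge between two correctly-matched vertices in $G_1 \wedge_\mu G_2$ is \emph{not} coincidental and is "free," which weakens the lower bound on the number of coincidental edges. One needs a combinatorial argument (isolating a component structure, or a minimal counterexample / "peeling" argument on the symmetric difference of $\mu$ and $\pi_*$) guaranteeing that the mismatched part alone carries $\Omega(kt)$ coincidental edges — essentially because any vertex that $\mu$ mismatches still needs degree $\ge k$ in $G_1\wedge_\mu G_2$, and all $k$ of those edges are coincidental. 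Getting the constants sharp enough to land at $12/(2\alpha-1)$, and handling the large-$t$ entropy term $\binom{\binom t 2}{\Theta(kt)}$ against the sparsity $p_{\max} \le n^{-\alpha+o(1)}$, is where the real work lies; the rest is bookkeeping.
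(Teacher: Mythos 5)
Your high-level strategy (a first-moment bound over erroneous $k$-core matchings, exploiting that edges at mismatched vertices are ``coincidental'') is in the right spirit, but there is a genuine quantitative gap at the heart of your Step (3), and you in fact notice its symptom without diagnosing the cause: your exponent count gives $k > 4/(1-\alpha)$ or $k>4/\alpha$ rather than $12/(2\alpha-1)$. The missing idea is that a fresh coincidental pair requires \emph{two distinct parent edges} to be present: the event that $(i,j)\in G_1$ and $(\mu(i),\mu(j))\in G_2$ with $\{\mu(i),\mu(j)\}\neq\{\pi_*(i),\pi_*(j)\}$ has probability $p_{ij}\,p_{\pi_*^{-1}(\mu(i)),\pi_*^{-1}(\mu(j))}\,s^2\le p_{\max}^2 s^2$, not $p_{\max}s^2$ as you assert. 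The factor $n\,p_{\max}^2\le n^{1-2\alpha+o(1)}$ is exactly where the denominator $2\alpha-1$ (and the requirement $\alpha>1/2$) comes from; with only $p_{\max}$ per edge you cannot recover the stated constant. The exception is the set of at most $d$ ``transposition'' pairs with $\{\mu(i),\mu(j)\}=\{\pi_*(i),\pi_*(j)\}$, which genuinely have probability $p_{ij}s^2$ and must be treated separately, as the paper does. You also assert independence of the coincidental indicators, which is false in general ($A_{ij}B_{\mu(i)\mu(j)}$ and $A_{ab}B_{\mu(a)\mu(b)}$ are dependent whenever $\{\mu(i),\mu(j)\}=\{\pi_*(a),\pi_*(b)\}$ or vice versa); the paper handles this by $3$-coloring a dependency graph of maximum degree $2$.

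The paper also organizes the union bound quite differently, in a way that sidesteps the subgraph/edge-subset entropy you struggle with. Rather than enumerating vertex sets of size $t$ and edge subsets of size $kt/2$, it invokes a reduction (from Cullina et al.\ and Gaudio--R\'acz--Sridhar) to $\pi_*$-maximal matchings making $d$ errors and the relaxed ``weak $k$-core'' condition $f(\mu)\ge kd$, where $f(\mu)$ is the total degree of the mismatched vertices in $G_1\wedge_\mu G_2$; the success criterion is that $\xi:=\max_d\max_\mu \mathbb{P}(f(\mu)\ge kd)^{1/d}$ is $o(n^{-2})$, which is verified by a single Chernoff bound with $\theta=c\log n$ for $2/k<c<(2\alpha-1)/6$ (whence $k>12/(2\alpha-1)$). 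This automatically resolves your interleaving worry --- only degrees of mismatched vertices are counted, and every edge incident to a mismatched vertex is coincidental --- and replaces the $\binom{\binom t2}{kt/2}$ entropy term by the moment generating function of a binomial, so no small-$t$/large-$t$ case split is needed. As written, your proposal does not close these steps, and the per-edge probability error means the announced constant would not come out even if it did.
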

The proof of this lemma can be found in Appendix~\ref{sec:proof_k-core_lemma}.

\begin{remark}
\label{remark:densest_subgraph}
The $k$-core estimator bears some similarity to the \emph{dense subgraph estimator} of \cite{ding2022matching}, which outputs an estimator $\widehat{\mu}$ for which $G_1 \land_{\widehat{\mu}} G_2$ contains a large subgraph with a high \emph{average} degree. 
Remarkably, the dense subgraph estimator succeeds in partial graph matching all the way down to the information-theoretic threshold. 

In a sense, the $k$-core estimator can be viewed as a \emph{robust} version of the dense subgraph estimator: by finding a subgraph with a lower bound on the \emph{minimum} degree (rather than on the average degree), we can guarantee that all vertices in the matching are correctly matched. On the other hand, the dense subgraph estimator outputs a full matching rather than a partial matching, which makes several errors in sparse regimes. However, it is an interesting open problem whether the guarantees of the $k$-core estimator can be extended in some sense to the dense subgraph estimator.
\end{remark}

\subsection{Exact graph matching}

Lemma~\ref{lemma:k_core_correctness} allows to obtain sufficient conditions under which the $k$-core estimator $\widehat{\mu}_{k}$ recovers $\pi_{*}$, either exactly or partially. 
The next result is our main general theorem about exact graph matching in correlated inhomogeneous random graphs. 

\begin{theorem}
\label{thm:exact_recovery} 
Let $(G_1, G_2) \sim \correlated( \mathbf{p} , s)$.  
Suppose that $p_{\max} \le n^{- \alpha + o(1)}$ for some $\alpha \in (1/2, 1]$ and that $k > 12/ ( 2 \alpha - 1)$ is a constant. 
Furthermore, suppose that there exists $\epsilon > 0$ such that 
\begin{equation}\label{eq:expected_degree_condition} 
\min_{i \in [n]} \sum_{j=1}^{n} p_{ij} s^{2} \ge (1 + \epsilon) \log n.
\end{equation}
Then $\p ( \widehat{\mu}_k = \pi_* ) = 1 - o(1)$ as $n \to \infty$.
\end{theorem}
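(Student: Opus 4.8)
The plan is to deduce exact recovery from Lemma~\ref{lemma:k_core_correctness} by showing that, under the minimum-expected-degree condition~\eqref{eq:expected_degree_condition}, the $k$-core of the intersection graph $G_1 \wedge_{\pi_*} G_2$ is in fact the \emph{entire} vertex set $[n]$ with high probability. Once this is established, Lemma~\ref{lemma:k_core_correctness} gives $\widehat M_k = \core_k(G_1 \wedge_{\pi_*} G_2) = [n]$ and $\widehat\mu_k\{\widehat M_k\} = \pi_*\{[n]\}$, which is exactly the statement $\widehat\mu_k = \pi_*$.

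So the heart of the argument is the claim: with probability $1 - o(1)$, every vertex of $G_1 \wedge_{\pi_*} G_2$ has degree at least $k$, hence $\core_k(G_1 \wedge_{\pi_*} G_2) = [n]$. First I would note that $G_1 \wedge_{\pi_*} G_2$ is itself an inhomogeneous random graph: conditioned on the parent and the permutation, an edge $(i,j)$ survives into the intersection graph if it is present in the parent $G$ \emph{and} subsampled into both $G_1$ and $G_2$, which happens with probability $p_{ij} s^2$, independently across pairs. Thus $\deg_{G_1 \wedge_{\pi_*} G_2}(i)$ is a sum of independent Bernoulli variables with mean $\lambda_i := \sum_{j \neq i} p_{ij} s^2 \ge (1+\epsilon)\log n - p_{\max}s^2 = (1+\epsilon - o(1))\log n$ by~\eqref{eq:expected_degree_condition}. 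A standard Chernoff bound gives $\p(\deg_{G_1\wedge_{\pi_*}G_2}(i) < k) \le \p(\mathrm{Bin} \text{ with mean } \lambda_i \text{ is} < \lambda_i/2)$ for $n$ large (since $k$ is a constant and $\lambda_i \to \infty$), which is at most $\exp(-c\lambda_i) \le \exp(-c(1+\epsilon - o(1))\log n) = n^{-c(1+\epsilon) + o(1)}$; the real point is to use the sharp lower tail $\p(\mathrm{Bin}(\cdot) \le k) \le \exp(-\lambda_i + k\log\lambda_i + O(1))$, which for $\lambda_i \ge (1+\epsilon-o(1))\log n$ and constant $k$ is $n^{-(1+\epsilon) + o(1)} = o(1/n)$. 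A union bound over the $n$ vertices then shows that all degrees are at least $k$ with probability $1 - o(1)$.

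The main obstacle — really the only delicate point — is making the union bound go through: we need the per-vertex failure probability to be $o(1/n)$, which is precisely why the threshold in~\eqref{eq:expected_degree_condition} is $(1+\epsilon)\log n$ rather than just $\omega(1)$ or $(1-\epsilon)\log n$. The slack $\epsilon$ is exactly what buys the extra $n^{-\epsilon + o(1)}$ factor needed to beat the union bound; the lower-order corrections ($p_{\max}s^2 = o(1)$ from dropping the diagonal term, and the $k\log\lambda_i = O(\log\log n)$ term in the binomial tail) are all absorbed into the $o(1)$ in the exponent. One should also check that the conditioning on $\pi_*$ is harmless: since $\pi_*$ merely relabels, the law of $\deg_{G_1\wedge_{\pi_*}G_2}(i)$ does not depend on $\pi_*$, so the bound holds unconditionally. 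Finally, I would combine the two high-probability events — the event from Lemma~\ref{lemma:k_core_correctness} and the event $\{\core_k(G_1\wedge_{\pi_*}G_2) = [n]\}$ — via a union bound to conclude.
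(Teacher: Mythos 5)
Your proposal is correct and follows essentially the same route as the paper: the paper likewise reduces (via Lemma~\ref{lemma:k_core_correctness}) to showing that $\min\deg(G_1 \land_{\pi_*} G_2) \ge k$ with high probability, and establishes this by a union bound over vertices combined with an exponential-moment lower-tail bound on the degrees (Lemma~\ref{lemma:Zk_expectation}, applied with $c = (1+\epsilon)^{-1/2}$ so that the per-vertex failure probability is $n^{-\sqrt{1+\epsilon}} = o(1/n)$). Your observation that a crude multiplicative Chernoff bound does not suffice and that one needs the sharper Poisson-type lower tail is exactly the point the paper's choice of $c$ addresses, so the two arguments differ only in this cosmetic parametrization.
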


That is, under the conditions of the theorem, the $k$-core estimator $\widehat{\mu}_{k}$ exactly recovers $\pi_{*}$ with high probability. 
The condition~\eqref{eq:expected_degree_condition} is simple and natural: in words, it says that the expected degree of every node in the intersection graph $G_{1} \land_{\pi_{*}} G_{2}$ is at least $(1+\epsilon) \log n$. 
In the Erd\H{o}s--R\'enyi case this simplifies to $n p s^{2} \geq (1+\epsilon) \log n$, 
which is known to be information-theoretically optimal~\cite{cullina2016improved,cullina2018exact}. 
More generally, depending on $\mathbf{p}$, it may be possible to improve upon~\eqref{eq:expected_degree_condition}; in fact, we prove a slightly stronger result (with a weaker sufficient condition) in Theorem~\ref{thm:general_exact_recovery} below. 
However, we conjecture that the simple sufficient condition in Theorem~\ref{thm:exact_recovery} is near-optimal for a wide range of~$\mathbf{p}$. 

We next specialize Theorem~\ref{thm:exact_recovery} to three important cases: correlated stochastic block models, correlated Chung-Lu graphs, and correlated random geometric graphs. 
The proofs are short, and can be found in Appendix \ref{sec:proofs_corollaries}.
We note that in all three cases the sufficient condition for exact recovery is a condition on the minimum expected degree, which is simple to compute for most natural distributions over the latent variables.

\begin{corollary}\label{cor:sbm_exact}
Let $(G_1, G_2) \sim \csbm( V_1, \ldots, V_m, \mathbf{q}, s)$.
Suppose that $\max_{a,b \in [m]} q_{ab} \le n^{- \alpha + o(1)}$ for some $\alpha \in (1/2, 1]$ and  that $k > 12/ ( 2 \alpha - 1)$ is a constant.
Furthermore, suppose that there exists $\epsilon > 0$ such that  
\[
\min_{a \in [m]} \sum_{b = 1}^m |V_b| q_{ab} s^2 \ge (1 + \epsilon) \log n.
\]
Then $\p ( \widehat{\mu}_k = \pi_* ) = 1 - o(1)$ as $n \to \infty$.
\end{corollary}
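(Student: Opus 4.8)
The plan is to derive Corollary~\ref{cor:sbm_exact} as a direct specialization of Theorem~\ref{thm:exact_recovery}. The key observation is that a correlated stochastic block model is, by definition, a correlated inhomogeneous random graph: conditioned on the community partition $V_1, \ldots, V_m$, we have $(G_1, G_2) \sim \correlated(\mathbf{p}, s)$ where $p_{ij} = q_{ab}$ whenever $i \in V_a$ and $j \in V_b$. So all that is required is to check that the three hypotheses of Theorem~\ref{thm:exact_recovery} translate into the three hypotheses of the corollary.

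First I would verify the density condition. Since $p_{ij} = q_{ab}$ for $i \in V_a$, $j \in V_b$, we have $p_{\max} = \max_{i,j \in [n]} p_{ij} = \max_{a,b \in [m]} q_{ab} \le n^{-\alpha + o(1)}$, which is exactly the stated hypothesis; the assumption $k > 12/(2\alpha - 1)$ with $k$ constant is carried over verbatim. Next I would verify the minimum expected degree condition~\eqref{eq:expected_degree_condition}. Fix any $i \in [n]$ and let $a \in [m]$ be such that $i \in V_a$. Then
\[
\sum_{j=1}^{n} p_{ij} s^2 = s^2 \sum_{b=1}^{m} \sum_{j \in V_b} q_{ab} = s^2 \sum_{b=1}^{m} |V_b| q_{ab},
\]
so that $\min_{i \in [n]} \sum_{j=1}^{n} p_{ij} s^2 = \min_{a \in [m]} s^2 \sum_{b=1}^{m} |V_b| q_{ab}$, where the minimum on the right is over those $a$ with $V_a \neq \emptyset$ (empty communities can be ignored, or one may assume without loss of generality that all $V_a$ are nonempty). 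The hypothesis $\min_{a \in [m]} \sum_{b=1}^{m} |V_b| q_{ab} s^2 \ge (1+\epsilon)\log n$ is therefore precisely~\eqref{eq:expected_degree_condition}.

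With all hypotheses of Theorem~\ref{thm:exact_recovery} verified, the conclusion $\p(\widehat{\mu}_k = \pi_*) = 1 - o(1)$ follows immediately. (One minor point worth a sentence: the $o(1)$ error exponents in $p_{\max}$ and the $o(1)$ probability bound are uniform in the partition, so if the community structure is itself random one can then condition on it and apply the corollary, as the paper notes in its discussion of latent variables.) I do not anticipate any genuine obstacle here — the entire content of the corollary is the bookkeeping that $\sum_{j} p_{ij}$ collapses to a sum over communities weighted by their sizes; the substance is all in Theorem~\ref{thm:exact_recovery}, hence in Lemma~\ref{lemma:k_core_correctness}.
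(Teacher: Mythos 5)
Your proposal is correct and follows exactly the paper's route: specialize Theorem~\ref{thm:exact_recovery} by identifying $p_{\max}$ with $\max_{a,b} q_{ab}$ and collapsing $\sum_j p_{ij}$ into a community-weighted sum. The only (harmless) slip is the claimed equality $\sum_{j=1}^n p_{ij} = \sum_b |V_b| q_{ab}$: since $p_{ij}$ is only defined (equal to $q_{ab}$) for \emph{distinct} $i,j$, the correct identity is $\sum_{j=1}^n p_{ij} = \sum_b |V_b| q_{ab} - q_{aa}$, and the paper explicitly absorbs the $q_{aa}s^2 \le 1$ discrepancy by weakening $(1+\epsilon)\log n$ to $(1+\epsilon/2)\log n$ before invoking the theorem.
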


% \begin{proof}
% Let $a \in [m]$, and suppose that $i \in V_a$. For $n$ sufficiently large, it holds that
% \begin{multline}
% \label{eq:dmin_sbm}
% \dmin s^2 = \sum_{b = 1}^m |V_b| q_{ab} s^2 - q_{aa}s^2 \\
% \ge (1 + \epsilon ) \log n - 1 \ge (1 + \epsilon/2) \log n. 
% \end{multline}
% In the first inequality above, we have used that $q_{aa}s^2 \le 1$.
% An application of Theorem \ref{thm:exact_recovery} proves the corollary.
% \end{proof}

\begin{corollary}\label{cor:cl_exact}
Let $(G_1, G_2) \sim \ccl( \mathbf{w}, s)$.
Suppose that 
\begin{equation}
\label{eq:chung_lu_exact_recovery_1}
\max_{i \in [n]} w_i \le n^{ - \alpha / 2 + o(1)} \sqrt{  \sum_{i = 1}^n w_i },
\end{equation}
for some $\alpha \in (1/2, 1]$, and let $k > 12/(2 \alpha - 1)$ be a constant.
Furthermore, suppose that there exists $\epsilon > 0$ such that
\[
\min_{i \in [n]} w_i s^2 \ge (1 + \epsilon) \log n.
\]
Then $\p( \widehat{\mu}_k = \pi_* ) = 1 - o(1)$ as $n \to \infty$.
\end{corollary}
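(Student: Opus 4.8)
The goal is to derive Corollary~\ref{cor:cl_exact} from Theorem~\ref{thm:exact_recovery} by verifying the two hypotheses of the latter for the correlated Chung--Lu model. The plan is as follows.

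First I would check the density hypothesis $p_{\max} \le n^{-\alpha + o(1)}$. In the Chung--Lu model we have $p_{ij} = w_i w_j / (\sum_k w_k)$, so $p_{\max} \le (\max_i w_i)^2 / \sum_k w_k$. Substituting the assumed bound~\eqref{eq:chung_lu_exact_recovery_1}, namely $\max_i w_i \le n^{-\alpha/2 + o(1)} \sqrt{\sum_k w_k}$, gives $(\max_i w_i)^2 \le n^{-\alpha + o(1)} \sum_k w_k$, and dividing by $\sum_k w_k$ yields $p_{\max} \le n^{-\alpha + o(1)}$, as required. Here one should note the mild point that the $o(1)$ in the exponent is harmless since squaring $n^{-\alpha/2+o(1)}$ gives $n^{-\alpha+o(1)}$.

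Second I would verify the minimum expected degree condition~\eqref{eq:expected_degree_condition}. For a fixed vertex $i$, its expected degree in the intersection graph is $\sum_{j=1}^n p_{ij} s^2 = s^2 \sum_{j \ne i} w_i w_j / (\sum_k w_k) = s^2 w_i \big( \sum_{j} w_j - w_i \big) / \sum_k w_k$. This is slightly smaller than $s^2 w_i$ because of the missing $j = i$ diagonal term: precisely, $\sum_{j=1}^n p_{ij} s^2 = s^2 w_i (1 - w_i / \sum_k w_k)$. Under~\eqref{eq:chung_lu_exact_recovery_1} we have $w_i / \sum_k w_k \le (\max_i w_i)/\sum_k w_k \le n^{-\alpha/2 + o(1)} / \sqrt{\sum_k w_k} = o(1)$ provided $\sum_k w_k \to \infty$ (which holds, e.g., since the assumed lower bound $\min_i w_i s^2 \ge (1+\epsilon)\log n$ forces $\sum_k w_k$ to grow); hence $1 - w_i/\sum_k w_k \ge 1 - o(1)$ uniformly in $i$. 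Therefore $\min_i \sum_j p_{ij} s^2 \ge (1 - o(1)) s^2 \min_i w_i \ge (1 - o(1))(1 + \epsilon)\log n \ge (1 + \epsilon/2)\log n$ for $n$ large, so condition~\eqref{eq:expected_degree_condition} holds with $\epsilon$ replaced by $\epsilon/2$. Since $k > 12/(2\alpha - 1)$ is assumed to be a constant, all hypotheses of Theorem~\ref{thm:exact_recovery} are met, and the conclusion $\p(\widehat{\mu}_k = \pi_*) = 1 - o(1)$ follows directly.

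This argument is essentially a short substitution exercise, so I do not anticipate a genuine obstacle; the only subtlety worth spelling out carefully is the ``off-by-one'' loss in the degree sum coming from the excluded diagonal term $p_{ii}$, and confirming that this correction factor $1 - w_i/\sum_k w_k$ is $1 - o(1)$ uniformly over $i$ — which, as noted above, is guaranteed by~\eqref{eq:chung_lu_exact_recovery_1} together with $\sum_k w_k \to \infty$. One should also state clearly at the outset that the result is really a statement about $\ccl(\mathbf{w}, s)$ with the weights $\mathbf{w}$ treated as fixed (i.e., already conditioned on the latent variables, as in the discussion preceding the examples), so that Theorem~\ref{thm:exact_recovery} applies verbatim.
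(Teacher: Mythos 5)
Your proposal is correct and follows essentially the same route as the paper: verify $p_{\max}\le n^{-\alpha+o(1)}$ by squaring \eqref{eq:chung_lu_exact_recovery_1}, then check the minimum expected degree condition \eqref{eq:expected_degree_condition} after accounting for the excluded diagonal term, and invoke Theorem~\ref{thm:exact_recovery}. The only cosmetic difference is that the paper handles the diagonal term by subtracting $p_{\max}s^{2}\le n^{-\alpha+o(1)}$ additively, whereas you use the multiplicative factor $1-w_{i}/\sum_{k}w_{k}=1-o(1)$; both are valid.
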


% \begin{proof}
% We start by showing that $p_{\max} \le n^{- \alpha + o(1)}$ if \eqref{eq:chung_lu_exact_recovery_1} holds.
% For any $i,j \in [n]$,
% \begin{equation}
% \label{eq:chung_lu_probability_bound}
% \frac{w_i w_j }{\sum_{i = 1}^n w_i } \le  \left( \frac{\max_{i \in [n]} w_i}{ \sqrt{\sum_{i = 1}^n w_i }} \right)^2 \le n^{- \alpha + o(1)},
% \end{equation}
% where the final bound is a consequence of \eqref{eq:chung_lu_exact_recovery_1}.
% In particular, since $p_{ij} = w_i w_j / ( \sum_{k = 1}^n w_k )$ for distinct $i,j \in [n]$, it follows that $p_{\max} \le n^{- \alpha + o(1)}$.
% It remains to lower bound the minimum expected degree. For any $i \in [n]$, it holds for $n$ sufficiently large that
% \begin{align}
% \label{eq:dmin_chung_lu}
% \dmin s^2 &  = \min_{i \in [n]} \sum_{j \in [n] : j \neq i} \frac{ w_i w_j s^2 }{\sum_{k = 1}^n w_k } \nonumber \\
% & \ge \min_{i \in [n]} \sum_{j = 1}^n \frac{w_i w_j s^2}{\sum_{k = 1}^n w_k } - p_{\max} s^2 \nonumber \\
% & \ge \min_{i \in [n]} w_i s^2 - n^{ - \alpha + o(1)} \ge (1 + \epsilon /2) \log n.
% \end{align}
% In the display above, the first inequality uses \eqref{eq:chung_lu_probability_bound}.
% The desired result now follows from Theorem \ref{thm:exact_recovery}.
% \end{proof}

\begin{corollary}\label{cor:rgg_exact}
Let $(G_1, G_2) \sim \crg(\mathbf{x}, r, p)$. Suppose that $p \le n^{- \alpha + o(1)}$ for some $\alpha \in (1/2, 1]$, and let $k > 12/(2 \alpha - 1)$ be a constant.
Furthermore, suppose that there exists $\epsilon > 0$ such that 
\[
p s^2 \min_{i \in [n]} | \{ j \in [n]: \| x_i - x_j \| \le r \} |  \ge (1 + \epsilon ) \log n.
\]
Then $\p ( \widehat{\mu}_k = \pi_* ) = 1 - o(1)$ as $n \to \infty$.
\end{corollary}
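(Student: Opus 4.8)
The plan is to derive Corollary~\ref{cor:rgg_exact} as an immediate specialization of Theorem~\ref{thm:exact_recovery}. By definition, $\crg(\mathbf{x}, r, p)$ is the instance of $\correlated(\mathbf{p}, s)$ with edge probabilities $p_{ij} = p\,\mathbf{1}(\|x_i - x_j\| \le r)$, so it suffices to verify the three hypotheses of Theorem~\ref{thm:exact_recovery} for this choice of $\mathbf{p}$: that $p_{\max} \le n^{-\alpha + o(1)}$ for some $\alpha \in (1/2,1]$, that $k > 12/(2\alpha - 1)$ is a constant, and that the minimum expected degree condition~\eqref{eq:expected_degree_condition} holds.

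The first two are essentially free. Since $p_{ij} \le p$ for every pair $i,j$, we have $p_{\max} \le p \le n^{-\alpha + o(1)}$ by the hypothesis on $p$, so the first condition holds with the same $\alpha$; and the condition on $k$ is assumed verbatim. For the third condition, I would simply compute the left-hand side of~\eqref{eq:expected_degree_condition}:
\[
\sum_{j=1}^n p_{ij} s^2 = p s^2 \sum_{j=1}^n \mathbf{1}(\|x_i - x_j\| \le r) = p s^2 \,\bigl| \{ j \in [n] : \|x_i - x_j\| \le r \} \bigr|,
\]
so taking the minimum over $i$ and invoking the corollary's hypothesis gives $\min_{i \in [n]} \sum_{j=1}^n p_{ij} s^2 \ge (1 + \epsilon) \log n$, which is exactly~\eqref{eq:expected_degree_condition}. (If one insists that the expected-degree sum run over $j \ne i$, the difference is the diagonal term $p s^2 \le 1 = o(\log n)$, which is absorbed into a slightly smaller $\epsilon$.) With all three hypotheses verified, Theorem~\ref{thm:exact_recovery} yields $\p(\widehat{\mu}_k = \pi_*) = 1 - o(1)$, which is the claim.

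There is no real obstacle here: this corollary is a direct plug-in, and the entire probabilistic content lives in Theorem~\ref{thm:exact_recovery} (and, upstream of it, in Lemma~\ref{lemma:k_core_correctness}). The only mild point of care is that the neighbourhood count appearing in the corollary's hypothesis and the expected-degree sum in~\eqref{eq:expected_degree_condition} agree up to a harmless $O(1)$ correction, which is comfortably absorbed by the $\epsilon$-slack; everything else is bookkeeping.
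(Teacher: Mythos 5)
Your proposal is correct and follows exactly the paper's own argument: observe that $p_{\max} \le p \le n^{-\alpha+o(1)}$ and then verify that the corollary's hypothesis is precisely condition~\eqref{eq:expected_degree_condition} of Theorem~\ref{thm:exact_recovery} written out for $p_{ij} = p\,\mathbf{1}(\|x_i - x_j\| \le r)$. Your extra remark about the diagonal term being absorbed into the $\epsilon$-slack is a harmless (and slightly more careful) elaboration of the same plug-in.
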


% \begin{proof}
% From \eqref{eq:rg_probability}, it is evident that $p_{\max} \leq p$, hence $p_{\max} \le n^{- \alpha + o(1)}$. The desired result follows by writing out the condition~\eqref{eq:expected_degree_condition} from Theorem \ref{thm:exact_recovery} in this setting. 
% \end{proof}

\subsection{Partial graph matching}

Going beyond exact matching, Lemma~\ref{lemma:k_core_correctness} also allows us to obtain quantitative results on partially recovering $\pi_{*}$ using the $k$-core estimator $\widehat{\mu}_{k}$, when exact recovery is not possible. 
Before stating our main results, we define
\[
R : = \frac{ (n-1) p_{\max} }{ \min_{i \in [n]} \sum_{j = 1}^n p_{ij}}.
\]
The quantity $R$ measures, in a sense, the \emph{heterogeneity} in the inhomogeneous graph model.
Indeed, if the $p_{ij}$'s were constant, then $R = 1$. However, if there are significant differences between the minimum and maximum values of $p_{ij}$, then $R$ takes on a larger value.

The next result is our main general theorem about partial graph matching in correlated inhomogeneous random graphs. It highlights the role of $R$ in the partial recovery of $\pi_*$. 
For simplicity we focus here on the case when $p_{\max} = n^{-1+o(1)}$, 
though 
% we note that 
the techniques extend to a wider range of~$\mathbf{p}$. 

\begin{theorem}
\label{thm:partial_matching} 
Let $(G_1, G_2) \sim \correlated( \mathbf{p} , s)$.  
Suppose that $p_{\max} = n^{-1 + o(1)}$ and $k = 13$.
If 
\[
\min_{i \in [n]} \sum_{j = 1}^n p_{ij} s^2 \ge \max \{ R, 154 \},
\]
then with probability $1 - o(1)$ it holds that 
\begin{equation}\label{eq:fraction_matched}
 | \widehat{M}_k | \ge \left( 1  - 3e^{- \min_{i \in [n]} \sum_{j = 1}^n p_{ij} s^2 / 7} \right) n - o(n)
\end{equation}
and that 
\begin{equation}\label{eq:all_correctly_matched}
\widehat{\mu}_k \{ \widehat{M}_k \} = \pi_* \{ \widehat{M}_k \}. 
\end{equation}
\end{theorem}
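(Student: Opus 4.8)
The plan is to derive both conclusions from Lemma~\ref{lemma:k_core_correctness} together with a lower bound on the size of the $k$-core of the intersection graph. Since $p_{\max} = n^{-1+o(1)}$ we may take $\alpha = 1$, and since $k = 13 > 12 = 12/(2\alpha-1)$ the hypotheses of Lemma~\ref{lemma:k_core_correctness} are met; hence with probability $1-o(1)$ we have $\widehat M_k = \core_k(G_1 \land_{\pi_*} G_2)$ and $\widehat\mu_k\{\widehat M_k\} = \pi_*\{\widehat M_k\}$. The second identity is exactly~\eqref{eq:all_correctly_matched}, so it remains to show that $|\core_k(G_1 \land_{\pi_*} G_2)|$ is at least the right-hand side of~\eqref{eq:fraction_matched} with high probability. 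Abbreviate $d_i := \sum_j p_{ij} s^2$ and $d_{\min} := \min_i d_i$, so that the right-hand side of~\eqref{eq:fraction_matched} is $(1 - 3e^{-d_{\min}/7})n - o(n)$ and $d_{\min} \ge \max\{R,154\}$ by hypothesis. The key structural fact is that the intersection graph $H := G_1 \land_{\pi_*} G_2$ is itself inhomogeneous: an unordered pair $\{i,j\}$ is an edge of $H$ iff it was present in the parent and survived both independent subsamplings, which happens with probability $p_{ij} s^2$ independently across pairs, so $H \sim \inhomogeneous(\{p_{ij} s^2\})$.

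The first half of the argument is a deterministic reduction of $|V \setminus \core_k(H)|$ to a count of low-degree vertices. Let $S := V \setminus \core_k(H)$ and fix a peeling order $v_1, v_2, \ldots$ of $S$: when $v_\ell$ is removed it has fewer than $k$ neighbours among the vertices not yet removed, hence at least $\deg_H(v_\ell) - (k-1)$ of its neighbours lie in $\{v_1, \ldots, v_{\ell-1}\}$. Charging each edge of $H\{S\}$ to its later-removed endpoint gives $e(H\{S\}) \ge \sum_{v \in S}\big(\deg_H(v) - (k-1)\big)$, and since also $2 e(H\{S\}) \le \sum_{v \in S}\deg_H(v)$, we get $\sum_{v \in S}\deg_H(v) \le 2(k-1)|S| = 24|S|$. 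As the threshold $48$ exceeds $2(k-1) = 24$, fewer than half of $S$ can have $H$-degree above $48$, so $|S| \le 2\,\big|\{v \in V : \deg_H(v) \le 48\}\big|$, and it remains to bound the number of vertices of $H$ of degree at most $48$.

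The second half is probabilistic, and here the constants and $R$ come into play. For each $i$, $\deg_H(i)$ is a sum of independent Bernoullis with mean $d_i \ge 154$, and a Chernoff bound on the lower tail gives $\p(\deg_H(i) \le 48) \le e^{-d_i/7} \le e^{-d_{\min}/7}$; the constant $154$ is exactly what makes $e^{-\mu}(e\mu/48)^{48} \le e^{-\mu/7}$ hold for all $\mu \ge 154$, and together with the threshold $48$ being comfortably above $2(k-1)$ this is what forces the choices $k = 13$ and the exponent $1/7$. Thus $\mathbb E\,\big|\{v : \deg_H(v) \le 48\}\big| \le n e^{-d_{\min}/7}$. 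To promote this to a high-probability bound I would compute the second moment: the events $\{\deg_H(u) \le 48\}$ and $\{\deg_H(v) \le 48\}$ become independent once the status of the single shared edge $\{u,v\}$ is revealed, which bounds $\mathrm{Cov}\big(\mathbf 1(\deg_H(u) \le 48),\mathbf 1(\deg_H(v) \le 48)\big)$ by $p_{uv} s^2 e^{-\Omega(d_{\min})}$; summing over pairs bounds the total covariance by $e^{-\Omega(d_{\min})} n(n-1) p_{\max} s^2 = e^{-\Omega(d_{\min})} n R d_{\min} \le n d_{\min}^2 e^{-\Omega(d_{\min})} = O(n)$, using $R d_{\min} = (n-1) p_{\max} s^2$ and then the hypothesis $R \le d_{\min}$. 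Combined with $\sum_v \mathrm{Var}\big(\mathbf 1(\deg_H(v) \le 48)\big) \le n e^{-d_{\min}/7}$, this shows the variance is $o(n^2)$, so Chebyshev's inequality yields $\big|\{v : \deg_H(v) \le 48\}\big| \le n e^{-d_{\min}/7} + o(n)$ with probability $1-o(1)$. Hence $|S| \le 2 n e^{-d_{\min}/7} + o(n) \le 3 n e^{-d_{\min}/7} + o(n)$, and $|\widehat M_k| = |\core_k(H)| = n - |S|$ satisfies~\eqref{eq:fraction_matched}.

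The step I expect to be the main obstacle is this last concentration estimate in the inhomogeneous regime: without the symmetry available in the Erd\H{o}s--R\'enyi case, the spread of the edge probabilities has to be absorbed, and it is precisely the bound on the total covariance — which needs $R \le d_{\min}$ — that dictates the otherwise mysterious hypothesis $d_{\min} \ge R$ beyond the universal requirement $d_{\min} \ge 154$. A secondary annoyance is the simultaneous calibration of the absolute constants ($k = 13$, the degree threshold $48$, the exponent $1/7$ in~\eqref{eq:fraction_matched}), which must be chosen so that both the Chernoff estimate and the degree-splitting inequality close with a margin.
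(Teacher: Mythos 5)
Your proposal is correct, and at the one step that really matters it takes a genuinely different route from the paper. Both arguments reduce the theorem, via Lemma~\ref{lemma:k_core_correctness} and the observation that $H := G_1 \land_{\pi_*} G_2 \sim \inhomogeneous(\{p_{ij}s^2\})$, to lower-bounding $|\core_k(H)|$, and both finish with a tail bound on low-degree vertex counts plus a Chebyshev concentration step. The difference is how the complement of the $k$-core is related to low-degree vertices. The paper uses a \L{}uczak-type expansion argument (Lemmas~\ref{lemma:subgraph_density}, \ref{lemma:k_core_size_1}, and \ref{lemma:k_core_expectation_bound}) to obtain $|F_k| \le 3|Z_{k+1}|$, which only works under the precondition $\E[|Z_{k+1}|] \le n/(5\gamma^2)$ with $\gamma = np_{\max}$; verifying that precondition is precisely where the hypothesis $\min_i \sum_j p_{ij}s^2 \ge R$ is consumed (via the inequality $e^{-x/7} \le 1/(6x^4)$ for $x \ge 154$). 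Your deterministic peeling/charging bound $\sum_{v \in F_k}\deg_H(v) \le 2(k-1)|F_k|$, hence $|F_k| \le 2\,|\{v : \deg_H(v) \le 4(k-1)\}|$, is valid, unconditional, and replaces all of that machinery. One consequence you should notice: your attribution of the role of $R$ is off. You claim $R \le \min_i\sum_j p_{ij}s^2$ is needed to control the total covariance in the Chebyshev step, but with $p_{\max} = n^{-1+o(1)}$ the crude per-pair bound $\mathrm{Cov} \le 3p_{\max}$ (exactly as in the proof of Lemma~\ref{lemma:Zk_concentration}) already gives total covariance $n^{1+o(1)} = o(n^2)$ with no reference to $R$. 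So your argument never actually uses the hypothesis involving $R$ and in fact proves a slightly stronger statement (the condition can be weakened to $\min_i\sum_j p_{ij}s^2 \ge 154$, and even $154$ is not tight for your Chernoff inequality at threshold $48$, which closes for means around $100$): in the paper, both $154$ and the appearance of $R$ are calibrated to the \L{}uczak precondition, not to a lower tail bound.
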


That is, under the conditions of the theorem, 
the $k$-core estimator matches a 
$(1-3\exp(-\min_{i \in [n]} \sum_{j = 1}^n p_{ij} s^{2} / 7) - o(1))$ 
fraction of the nodes (see~\eqref{eq:fraction_matched})
and all of these matches are correct (see~\eqref{eq:all_correctly_matched}), with high probability. 
We emphasize that 
the property of not making any erroneous matches is very useful, 
especially when the matching is further used for downstream tasks 
(e.g., community recovery~\cite{GRS22}).

We again specialize the general theorem to the three cases.
The proofs are short, and can be found in Appendix \ref{sec:proofs_corollaries}.
Here, too, in all three cases, the sufficient condition for partial recovery of $\pi_{*}$ depends on the minimum expected degree, as well as $R$ (a measure of heterogeneity), 
which are simple to compute for most natural distributions over the latent variables. 

\begin{corollary}\label{cor:sbm_partial}
Let $(G_1, G_2) \sim \csbm( V_1, \ldots, V_m, \mathbf{q},s)$
and let $q_{\max} := \max_{a,b \in [m]} q_{ab}$ and $q_{\min} : = \min_{a,b \in [m]} q_{ab}$.
Suppose that $q_{\max} = n^{-1 + o(1)}$ and $k = 13$.
The result of Theorem \ref{thm:partial_matching} holds if there exists $\epsilon > 0$ such that
\[
\min_{a \in [m]} \sum_{b = 1}^m |V_b| q_{ab} s^2 \ge  \max \left  \{\frac{q_{\max} }{q_{\min}}, 154 \right \} + \epsilon .
\]
\end{corollary}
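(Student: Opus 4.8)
The plan is to derive Corollary~\ref{cor:sbm_partial} directly from Theorem~\ref{thm:partial_matching} by translating its hypotheses into the block-model parametrization. For $(G_1,G_2)\sim\csbm(V_1,\dots,V_m,\mathbf{q},s)$ we have $p_{ij}=q_{ab}$ whenever $i\in V_a$ and $j\in V_b$, so that $\sum_{j=1}^n p_{ij}\ge\sum_{b=1}^m|V_b|q_{ab}-q_{\max}$ for every $i\in V_a$, while $p_{\max}$ equals the largest $q_{ab}$ over pairs of nonempty communities, whence $p_{\max}\le q_{\max}$. Writing $D:=\min_{a\in[m]}\sum_{b=1}^m|V_b|q_{ab}$, the corollary's assumption becomes $Ds^2\ge q_{\max}/q_{\min}+\epsilon$ together with $Ds^2\ge 154+\epsilon$, and $\min_{i\in[n]}\sum_{j=1}^n p_{ij}\ge D-q_{\max}$ with $q_{\max}=o(1)$. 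It then remains to check the three hypotheses of Theorem~\ref{thm:partial_matching}: the density condition $p_{\max}=n^{-1+o(1)}$, the choice $k=13$ (which is given), and $\min_{i\in[n]}\sum_{j=1}^n p_{ij}\,s^2\ge\max\{R,154\}$.

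For the density condition, the upper bound $p_{\max}\le q_{\max}=n^{-1+o(1)}$ is immediate, and for the lower bound I would fix any nonempty community $V_{a_0}$ and use $n\,p_{\max}\ge\sum_b|V_b|q_{a_0 b}\ge D\ge Ds^2\ge 154$, so $p_{\max}\ge 154/n=n^{-1+o(1)}$. For the third condition, $(D-q_{\max})s^2\ge Ds^2-q_{\max}\ge 154$ for $n$ large, so it remains to show $(D-q_{\max})s^2\ge R$, where $R=(n-1)p_{\max}/\min_i\sum_j p_{ij}$. The key observation is that the hypothesis forces $q_{\min}\ge 1/n$: indeed $q_{\max}/q_{\min}\le Ds^2\le D\le nq_{\max}$, and hence also $D\ge q_{\min}\sum_b|V_b|=q_{\min}n\ge 1$. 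Combining $D\ge q_{\min}n$ with $Ds^2\ge q_{\max}/q_{\min}$ gives $D^2s^2=D\cdot(Ds^2)\ge q_{\min}n\cdot(q_{\max}/q_{\min})=nq_{\max}\ge(n-1)p_{\max}$, so $R\le Ds^2$; reinstating the $q_{\max}$-corrections changes this only by $o(1)$ terms, which the additive slack $\epsilon$ absorbs (again since $\epsilon q_{\min}n\ge\epsilon$ is a positive constant). With all three hypotheses verified, Theorem~\ref{thm:partial_matching} yields~\eqref{eq:fraction_matched} and~\eqref{eq:all_correctly_matched}.

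The only real work here is bookkeeping: the quantities $p_{\max}$, $\min_i\sum_j p_{ij}$, and $R$ attached to the inhomogeneous graph differ from their clean block-model analogues ($q_{\max}$, $\sum_b|V_b|q_{ab}$, $q_{\max}/q_{\min}$) by self-loop terms and an $n$-versus-$(n-1)$ factor, and the main point to verify is that the \emph{additive} slack $\epsilon$ in the hypothesis---rather than a multiplicative one---suffices to absorb these corrections. As sketched above, it does, precisely because the hypothesis already implies $q_{\min}n\ge 1$; no probabilistic input beyond Theorem~\ref{thm:partial_matching} is needed.
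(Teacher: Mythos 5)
Your proof is correct and takes essentially the same route as the paper: both reduce the corollary to a direct verification of the hypotheses of Theorem~\ref{thm:partial_matching}, absorbing the self-loop and $n$-versus-$(n-1)$ corrections into the additive slack $\epsilon$. The only difference is cosmetic: the paper bounds $R \le q_{\max}/q_{\min}$ immediately from $\min_{i}\sum_{j}p_{ij}\ge (n-1)q_{\min}$, whereas you reach the same conclusion through the chain $D^2 s^2 \ge n q_{\max}$; your extra care with the two-sided bound on $p_{\max}$ is welcome but does not change the method.
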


% \begin{proof}
% Since $\dmin \ge (n - 1) q_{\min}$, we have that $R \le q_{\max} / q_{\min}$. Furthermore, we have that
% \begin{multline*}
% \dmin s^2 = \min_{a \in [m]} \sum_{b = 1}^m |V_b| q_{ab} s^2 - o(1) \\
% \ge (1 + \epsilon ) \max \left \{ \frac{q_{\max}}{q_{\min}} , 154 \right \} - o(1) \ge \max \{ R, 154 \},
% \end{multline*}
% where the equality in the display above follows from \eqref{eq:dmin_sbm}.
% The desired result follows by applying Theorem \ref{thm:partial_matching}.
% \end{proof}

\begin{corollary}\label{cor:cl_partial}
Let $(G_1, G_2) \sim \ccl( \mathbf{w}, s)$,
and denote $w_{\max} : = \max_{i \in [n]} w_i$ and $w_{\min} : = \min_{i \in [n]} w_i$.
The result of Theorem \ref{thm:partial_matching} holds if $w_{\max}$ is bounded as $n \to \infty$, and there is $\epsilon > 0$ such that
\[
w_{\min} s^2 \ge \max \left \{  \left( \frac{w_{\max}}{w_{\min}} \right)^2 , 154 \right \} + \epsilon .
\]
\end{corollary}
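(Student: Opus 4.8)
The plan is to reduce Corollary~\ref{cor:cl_partial} to Theorem~\ref{thm:partial_matching} by verifying its hypotheses for the correlated Chung--Lu model, where $p_{ij} = w_i w_j / W$ with $W := \sum_{k \in [n]} w_k$. The three things to check are: (i) $p_{\max} = n^{-1+o(1)}$; (ii) the quantity $R$ of the general theorem is at most $(w_{\max}/w_{\min})^2$; and (iii) the minimum expected degree of the intersection graph is at least $\max\{R, 154\}$, which then follows from the corollary's hypothesis together with (ii).

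First I would handle the degree and density estimates. Since $w_{\max}$ is bounded, we have $p_{\max} = w_{\max}^2 / W$. On the other hand $W = \sum_k w_k \geq n w_{\min}$, and from the corollary's hypothesis $w_{\min} s^2 \geq 154$ we get $w_{\min} \geq 154/s^2 \geq 154$, so $W = \Theta(n)$ (it is also $O(n)$ since $w_{\max}$ is bounded). Hence $p_{\max} = w_{\max}^2/W = \Theta(1/n) = n^{-1+o(1)}$, giving (i). For the minimum expected degree, compute
\[
\min_{i \in [n]} \sum_{j=1}^n p_{ij} s^2 = \min_{i \in [n]} \frac{w_i}{W}\left(\sum_{j=1}^n w_j\right) s^2 = s^2 \min_{i \in [n]} w_i = w_{\min} s^2,
\]
using that $\sum_{j=1}^n w_j = W$ (up to the negligible diagonal term $w_i^2/W = O(1/n)$, which I would absorb into the $o(\cdot)$ slack afforded by the $+\epsilon$ in the hypothesis). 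So the minimum expected degree of the intersection graph equals $w_{\min} s^2$ (asymptotically).

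Next I would bound $R$. By definition $R = (n-1)p_{\max}/\min_{i}\sum_j p_{ij}$. The numerator is $(n-1) w_{\max}^2/W \leq n w_{\max}^2 / W \leq n w_{\max}^2/(n w_{\min}) = w_{\max}^2/w_{\min}$, and the denominator is $w_{\min}$ (asymptotically, as computed above). Therefore $R \leq w_{\max}^2/w_{\min}^2 = (w_{\max}/w_{\min})^2$. Combining, the corollary's hypothesis $w_{\min} s^2 \geq \max\{(w_{\max}/w_{\min})^2, 154\} + \epsilon$ gives $\min_i \sum_j p_{ij} s^2 \geq \max\{R, 154\}$ (the $\epsilon$ covers the $o(1)$ discrepancies from dropping diagonal terms), so the hypotheses of Theorem~\ref{thm:partial_matching} are met with $k = 13$. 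Applying that theorem and substituting $\min_i \sum_j p_{ij} s^2 = w_{\min}s^2$ into~\eqref{eq:fraction_matched} yields the claimed conclusion.

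I do not expect a serious obstacle here — this is a routine specialization. The only mild care needed is in the bookkeeping of the diagonal terms $p_{ii}$ (which are not real edge probabilities but appear in the sums $\sum_{j=1}^n p_{ij}$): one must check that including or excluding them changes the relevant quantities by at most $O(1/n)$, which is swallowed by the $+\epsilon$ in the hypothesis and by the fact that $w_{\min}$ is bounded below by a constant. The conceptual content is entirely in Theorem~\ref{thm:partial_matching} and Lemma~\ref{lemma:k_core_correctness}; the corollary is just the observation that for Chung--Lu graphs the heterogeneity parameter $R$ is controlled by the squared ratio of extreme weights and the minimum expected degree is exactly $w_{\min}s^2$.
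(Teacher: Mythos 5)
Your proposal is correct and follows essentially the same route as the paper: bound $R$ by $(w_{\max}/w_{\min})^2$ (the paper does this via $R \le p_{\max}/p_{\min}$, you via bounding numerator and denominator separately, which is the same estimate), identify the minimum expected degree as $w_{\min}s^2 - o(1)$, and invoke Theorem~\ref{thm:partial_matching}. Your explicit verification that $p_{\max} = n^{-1+o(1)}$ (via $W = \Theta(n)$ from the boundedness of $w_{\max}$ and the lower bound $w_{\min} \ge 154/s^2$) is a detail the paper leaves implicit, and your bookkeeping of the diagonal terms matches the paper's $-o(1)$ correction.
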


% \begin{proof}
% We start by upper bounding $R$ as follows:
% \[
% R \le \frac{p_{\max}}{p_{\min}} \le \left( \frac{w_{\max}^2}{\sum_{k = 1}^n w_k } \right) \left( \frac{ w_{\min}^2}{\sum_{k = 1}^n w_k} \right)^{-1} = \left( \frac{ w_{\max}}{w_{\min}} \right)^2.
% \]
% Furthermore, we have that
% \begin{align*}
% \dmin s^2 & = w_{\min} s^2 - o(1) \\
% & \ge (1 + \epsilon ) \max \left \{ \left( \frac{w_{\max}}{w_{\min}} \right)^2 , 154 \right \} - o(1) \\
% & \ge \max \left \{ R, 154 \right \},
% \end{align*}
% where the equality in the first line follows from \eqref{eq:dmin_chung_lu} since $p_{\max} = o(1)$.
% The desired result now follows from applying Theorem \ref{thm:partial_matching}.
% \end{proof}

\begin{corollary}\label{cor:rgg_partial}
Let $(G_1, G_2) \sim \crg(\mathbf{x}, r, p)$. The result of Theorem~\ref{thm:partial_matching} holds if $p = n^{-1 + o(1)}$ and it holds for some $\epsilon > 0$ that
\[
p s^{2} \min_{i \in [n]} | \{ j \in [n]: \| x_i - x_j \| \le r \} | 
\geq \max \{ s\sqrt{np} , 154 \} + \epsilon .
\]
\end{corollary}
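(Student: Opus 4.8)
\emph{Proof proposal.} The plan is to check that the hypotheses of Corollary~\ref{cor:rgg_partial} imply all the hypotheses of Theorem~\ref{thm:partial_matching}, and then apply that theorem verbatim. First I would unpack the parameters of $\crg(\mathbf{x}, r, p)$: here $p_{ij} = p\,\mathbf{1}(\|x_i - x_j\|_2 \le r)$, so writing $N_i := |\{j \in [n] : \|x_i - x_j\|_2 \le r\}|$ (which includes $j=i$) and $N_{\min} := \min_{i \in [n]} N_i$, we have $\sum_{j=1}^n p_{ij} = p(N_i - 1)$ and hence $\min_{i \in [n]} \sum_{j=1}^n p_{ij} s^2 = p s^2 (N_{\min} - 1)$. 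Also $p_{\max} \le p = n^{-1+o(1)}$. The hypothesis $p s^2 N_{\min} \ge 154 + \epsilon$, combined with $p s^2 \le p = o(1)$, forces $N_{\min} \ge 2$ for all large $n$; in particular every vertex has a neighbour within distance $r$, so $p_{\max} = p = n^{-1+o(1)}$, which is the first requirement of Theorem~\ref{thm:partial_matching}, and the requirement $k = 13$ is immediate. (The fact that $N_{\min}\ge 2$ also ensures that $R$ is well defined.)

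Next I would verify the main inequality $\min_{i \in [n]} \sum_{j=1}^n p_{ij} s^2 \ge \max\{R, 154\}$. Since $p_{\max} = p$, we have $R = (n-1)p_{\max}/\min_{i}\sum_j p_{ij} = (n-1)/(N_{\min}-1)$, so the bound $\min_i \sum_j p_{ij} s^2 \ge R$ is equivalent to $p s^2 (N_{\min}-1)^2 \ge n-1$. To get this, start from the hypothesis: $p s^2 (N_{\min}-1) = p s^2 N_{\min} - p s^2 \ge s\sqrt{np} + \epsilon - p s^2 \ge s\sqrt{(n-1)p}$ for all large $n$ (using $s\sqrt{np}\ge s\sqrt{(n-1)p}$ and $p s^2 = o(1)$); dividing through by $s\sqrt{p}$ yields $s\sqrt{p}\,(N_{\min}-1) \ge \sqrt{n-1}$, and squaring gives exactly $p s^2 (N_{\min}-1)^2 \ge n-1$, hence $\min_i \sum_j p_{ij} s^2 \ge R$. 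The bound $\min_i \sum_j p_{ij} s^2 \ge 154$ is even simpler: $p s^2 (N_{\min}-1) \ge 154 + \epsilon - p s^2 \ge 154$ for all large $n$. With both bounds in hand, every hypothesis of Theorem~\ref{thm:partial_matching} is satisfied, so its conclusion---namely \eqref{eq:fraction_matched} and \eqref{eq:all_correctly_matched}---holds, which is the assertion of the corollary.

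The only nontrivial point in this argument is the algebraic identification of $s\sqrt{np}$ as the correct threshold governing the heterogeneity quantity $R$ (equivalently, the observation that the hypothesis implies $R \le s\sqrt{np}$); the rest is bookkeeping, with the off-by-one between $N_{\min}$ and $N_{\min}-1$ and the vanishing term $p s^2 = o(1)$ absorbed into the slack $\epsilon$ and the $o(\cdot)$ terms, exactly as in the derivation of Corollary~\ref{cor:rgg_exact}. I do not expect any genuine obstacle here.
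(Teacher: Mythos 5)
Your proposal is correct and follows essentially the same route as the paper's proof: bound $R$ by $(n-1)p/\dmin$ (equivalently $n/D$ with $D$ the minimum neighbour count excluding the vertex itself), observe that $\dmin s^2 \ge R$ rearranges to $\dmin s^2 \ge s\sqrt{np}$ after dividing by $s\sqrt{p}$ and squaring, and absorb the off-by-one between the two neighbour counts into the $\epsilon$ slack. Your write-up is merely more explicit about the $n$ versus $n-1$ and the $j=i$ bookkeeping, which the paper compresses into an $o(1)$ term.
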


\section{Proofs: Recovering the latent matching}\label{sec:proofs}

\subsection{Properties of the degree distribution}

For a graph $G$ and a positive integer $k$, define the set 
\begin{equation}
\label{eq:Zk}
Z_k : =  \{ i \in [n]: \deg_G(i) \le k \}.
\end{equation}
%In this subsection,
We 
%will establish 
state 
some useful results on the size of $Z_k$ in inhomogeneous random graphs. 
We defer their proofs to Appendix~\ref{sec:proofs_lemmas_degrees}. 

\begin{lemma}
\label{lemma:Zk_expectation}
Let $G \sim \inhomogeneous( \mathbf{p} )$. Then for any positive integer $k$ and any $c \in (0,1)$ we have that
\[
\E \left[ | Z_k | \right] \\
\le \left( \frac{1}{1 - c} \right)^k \sum_{i = 1}^n \exp \left ( - c \sum_{j = 1}^n p_{ij}  \right ).
\]
\end{lemma}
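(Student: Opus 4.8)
The plan is to bound $\E[|Z_k|]$ by summing, over all vertices $i$, the probability that $\deg_G(i) \le k$, and to control each such tail probability via a Chernoff-type argument applied to a sum of independent Bernoulli random variables. Writing $\deg_G(i) = \sum_{j \neq i} X_{ij}$ where the $X_{ij} \sim \mathrm{Bernoulli}(p_{ij})$ are independent, I would use the standard exponential moment method: for any $t > 0$,
\[
\p\!\left( \deg_G(i) \le k \right) = \p\!\left( e^{-t \deg_G(i)} \ge e^{-tk} \right) \le e^{tk} \, \E\!\left[ e^{-t \deg_G(i)} \right] = e^{tk} \prod_{j \neq i} \E\!\left[ e^{-t X_{ij}} \right].
\]
Since $\E[e^{-t X_{ij}}] = 1 - p_{ij}(1 - e^{-t}) \le \exp\!\left( -p_{ij}(1-e^{-t}) \right)$, this gives
\[
\p\!\left( \deg_G(i) \le k \right) \le \exp\!\left( tk - (1 - e^{-t}) \sum_{j \neq i} p_{ij} \right).
\]

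The next step is to choose $t$ so that the prefactor $e^{tk}$ matches the claimed bound $(1-c)^{-k}$ and the exponent matches $-c \sum_j p_{ij}$. Setting $e^{-t} = 1 - c$, i.e. $t = -\log(1-c) = \log\frac{1}{1-c}$, makes $1 - e^{-t} = c$, so $e^{tk} = \left( \frac{1}{1-c} \right)^k$ and the exponent becomes $-c\sum_{j \neq i} p_{ij}$. (One can freely pass from $\sum_{j \neq i} p_{ij}$ to $\sum_{j=1}^n p_{ij}$ at the cost of only making the bound larger, since $p_{ii} \ge 0$; alternatively note $p_{ii}$ is not relevant to the degree and just include it as slack.) Summing over $i \in [n]$ and applying linearity of expectation yields exactly
\[
\E[|Z_k|] = \sum_{i=1}^n \p\!\left( \deg_G(i) \le k \right) \le \left( \frac{1}{1-c} \right)^k \sum_{i=1}^n \exp\!\left( -c \sum_{j=1}^n p_{ij} \right),
\]
which is the claim.

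Honestly, there is no substantive obstacle here — the lemma is a routine Chernoff bound for inhomogeneous Bernoulli sums, and the only "trick" is the convenient reparametrization $e^{-t} = 1-c$ that produces the stated clean form. The one point requiring a line of care is the bookkeeping around whether the sum runs over $j \neq i$ or all $j \in [n]$; I would dispense with it by the monotonicity remark above. I would also note explicitly that the bound is vacuous (trivially true, as the right-hand side exceeds $n$) unless the minimum expected degree is somewhat larger than $k \log\frac{1}{1-c}$, which is consistent with how the lemma is used downstream.
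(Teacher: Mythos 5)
Your proof is correct and follows essentially the same route as the paper: both bound $\E[|Z_k|]$ by $\sum_i \p(\deg_G(i)\le k)$, apply the exponential Markov/Chernoff bound to the sum of independent Bernoullis, and choose the tilt parameter via $e^{-\theta}=1-c$ to obtain the stated form. The only cosmetic difference is that the paper sums over all $j\in[n]$ in the degree representation rather than $j\neq i$, which is immaterial for the reason you note.
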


% \begin{proof}
% We begin by writing $\E [ |Z_k| ] = \sum_{i = 1}^n \p ( \deg_G(i) \le k )$; we proceed by bounding the terms of the summation. For any $i \in [n]$, we have the distributional representation $\deg_G(i) \stackrel{d}{=} \sum_{j = 1}^n Y_j$, where the $Y_j$'s are independent and $Y_j \sim \mathrm{Bern}(p_{ij})$.
% For $\theta > 0$, it holds by Markov's inequality that
% \begin{align*}
% \p & ( \deg_{G}(i) \le k )  \le e^{\theta k} \E \left[ \exp ( - \theta \deg_{G}(i) ) \right] \\
% & = e^{\theta k} \prod_{j = 1}^n \E [ \exp ( - \theta Y_j )] = e^{\theta k} \prod_{j = 1}^n \left( 1 + p_{ij}  (e^{- \theta} - 1) \right) \\
% & \le \exp \left \{ \theta k - (1 - e^{- \theta}) \sum_{j = 1}^n p_{ij}  \right \}.
% \end{align*}
% The desired claim follows from setting $\theta = - \log (1 - c)$. 
% \end{proof}

\begin{lemma}
\label{lemma:Zk_concentration}
Let $G \sim \inhomogeneous ( \mathbf{p} )$, and suppose that $p_{\max} : = \max_{i,j \in [n] } p_{ij} = o(1/\sqrt{n})$. Then for every $k \in [n]$, 
% as $n \to \infty$,
\[
\p \left( \left| |Z_k| - \E \left[ |Z_k| \right] \right| \le \frac{1}{3} n^{3/4} \right) = 1 - o(1), 
\quad 
\text{ as } n \to \infty.
\]
\end{lemma}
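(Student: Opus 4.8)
The plan is to use a bounded-differences (Azuma–Hoeffding / McDiarmid) argument on the edge-indicator variables. Write $|Z_k|$ as a function $f(\{X_e\}_{e})$ of the independent Bernoulli edge-indicators $X_e = \mathbf{1}((i,j) \in E)$, $e = \{i,j\}$, where $X_e \sim \mathrm{Bernoulli}(p_e)$ and there are $\binom{n}{2}$ of them. The quantity $|Z_k| = \sum_{i=1}^n \mathbf{1}(\deg_G(i) \le k)$ is not Lipschitz with constant $O(1)$ in each coordinate: toggling a single edge $\{i,j\}$ can flip $i$ in or out of $Z_k$ and $j$ in or out of $Z_k$, so naively $|f(X) - f(X')| \le 2$ when $X, X'$ differ in one coordinate. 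McDiarmid's inequality with $m = \binom{n}{2}$ coordinates and bounded difference $c_e = 2$ then gives $\p(||Z_k| - \E|Z_k|| \ge t) \le 2\exp(-2t^2 / (4\binom{n}{2})) = 2\exp(-\Theta(t^2/n^2))$, which for $t = \tfrac{1}{3} n^{3/4}$ gives a bound of $2\exp(-\Theta(n^{-1/2})) = 2 - o(1)$ — useless. So the crude bounded-differences approach fails, and the real work is to exploit that the effective fluctuation scale is much smaller.

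The fix I would pursue is a vertex-exposure martingale combined with the sparsity hypothesis $p_{\max} = o(1/\sqrt n)$. Expose the graph one vertex at a time: let $\mathcal{F}_t$ be the $\sigma$-algebra generated by all edges among $\{1,\dots,t\}$, and set $D_t = \E[|Z_k| \mid \mathcal{F}_t] - \E[|Z_k| \mid \mathcal{F}_{t-1}]$. Revealing the edges incident to vertex $t$ (to lower-indexed vertices) affects: (i) whether $t \in Z_k$ — contributing at most $1$; (ii) the membership of each already-exposed neighbor of $t$ — but here the key point is that the \emph{number} of such neighbors is itself small, of order $\sum_{j<t} p_{tj} \le (t-1) p_{\max} = o(\sqrt n)$ in expectation, with concentration; and (iii) the conditional expectation over not-yet-exposed vertices changes only through a second-order effect. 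This suggests each martingale increment satisfies $|D_t| \le 1 + (\text{number of exposed neighbors of } t)$, and the sum of squares $\sum_t |D_t|^2$ is, with high probability, of order $n \cdot (n p_{\max}^2 + 1) = n \cdot o(1) + n = O(n)$ rather than $O(n^2)$. Feeding $\sum_t c_t^2 = O(n)$ into Azuma (or better, a Freedman-type inequality that uses the quadratic variation) yields $\p(||Z_k| - \E|Z_k|| \ge t) \le 2\exp(-\Theta(t^2/n))$ up to the low-probability event that some vertex has anomalously large degree; for $t = \tfrac13 n^{3/4}$ this is $2\exp(-\Theta(\sqrt n)) = o(1)$, as desired.

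The main obstacle, and the step needing care, is making rigorous the claim that the martingale increments are controlled by the (random but small) exposed-degree of vertex $t$ — in particular, handling increment (ii) cleanly requires bounding how much revealing the edges at $t$ can shift $\p(j \in Z_k \mid \mathcal{F}_t)$ for a previously-exposed $j$, summed over such $j$. An alternative, possibly cleaner route that avoids a random Lipschitz constant is a typical-bounded-differences / concentration-on-a-good-event argument: define the good event $\mathcal{E}$ that every vertex has degree at most, say, $n^{1/4}$ (which holds with probability $1 - o(1)$ by a union bound and Chernoff, since expected degrees are $o(\sqrt n)$), condition on a truncated configuration, and apply McDiarmid to the function restricted so that each coordinate's influence is bounded by $O(n^{1/4})$ — giving $\sum_e c_e^2 = \binom{n}{2} \cdot O(\sqrt n) = O(n^{5/2})$, which is still too large. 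So in fact the vertex-exposure martingale (which packages all $\Theta(n)$ edges at a vertex into one step) is essential: it is the grouping of edges by endpoint that reduces $\sum c_t^2$ from $O(n^{5/2})$ to $O(n)$. I would therefore commit to the vertex-exposure martingale plus Azuma, with the anomalous-degree event absorbed into the $o(1)$, and spend the bulk of the write-up on the per-increment bound in case (ii).
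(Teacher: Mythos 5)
There is a genuine quantitative gap in the key step of your martingale argument. You bound the vertex-exposure increment by $|D_t| \le 1 + N_t$, where $N_t$ is the number of already-exposed neighbors of $t$, and then assert that $\sum_t |D_t|^2$ is of order $n(np_{\max}^2 + 1) = O(n)$. This conflates $\sum_{j<t} p_{tj}^2$ with $\bigl(\sum_{j<t} p_{tj}\bigr)^2$: since $\E[N_t] = \sum_{j<t} p_{tj}$ can be as large as $np_{\max}$, one has $\E[N_t^2] \ge (\E[N_t])^2$, which can be of order $(np_{\max})^2 = n^2 p_{\max}^2$ --- a factor of $n$ larger than your estimate. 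Under the stated hypothesis $p_{\max} = o(1/\sqrt{n})$, take $p_{\max} = n^{-1/2-\delta}$ with small $\delta>0$: then $N_t$ is typically of order $n^{1/2-\delta}$, the variance proxy $\sum_t |D_t|^2$ is of order $n^{2-2\delta}$, and Azuma at deviation $t = \tfrac13 n^{3/4}$ gives $\exp(-\Theta(n^{3/2}/n^{2-2\delta})) = \exp(-\Theta(n^{2\delta - 1/2}))$, which is not $o(1)$ for $\delta < 1/4$. So the argument as written fails on most of the parameter range the lemma covers. It could in principle be rescued by sharpening the increment bound --- the contribution of an exposed vertex $j$ to $D_t$ is really $(A_{jt}+p_{jt})\cdot\p(S_j = k - d_j)$ where $S_j$ is $j$'s unexposed degree, and one would need an anti-concentration (local-limit) bound on that point probability --- but that is a substantially harder argument that you have not supplied, and it is exactly the step you flagged as ``needing care.''

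The paper's proof avoids all of this: it is a plain second-moment computation. Writing $|Z_k| = \sum_i E_i$ with $E_i = \mathbf{1}(\deg_G(i)\le k)$, it bounds $\mathrm{Cov}(E_i,E_j) \le 3p_{ij}$ by conditioning on the single edge $A_{ij}$ (the only source of dependence between $E_i$ and $E_j$), so that $\mathrm{Var}(|Z_k|) \le \E[|Z_k|] + 3n^2 p_{\max} = o(n^{3/2})$, and Chebyshev at deviation $\tfrac13 n^{3/4}$ gives $o(1)$. The lesson is that the lemma only asks for an $o(1)$ failure probability at the polynomial scale $n^{3/4}$, for which Chebyshev is exactly calibrated; exponential concentration is both unnecessary and, at this level of generality, genuinely hard to obtain. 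I would recommend you rewrite the proof along these second-moment lines.
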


\subsection{Exact recovery}

We state and prove here a result that is slightly stronger than Theorem~\ref{thm:exact_recovery} (which then follows immediately, see below). 

\begin{theorem}
\label{thm:general_exact_recovery} 
Let $(G_1, G_2) \sim \correlated( \mathbf{p} , s)$.  
Suppose that $p_{\max} \le n^{-\alpha + o(1)}$ for some $\alpha \in (1/2,1]$ 
and 
% let $\widehat{\mu}_k$ be the $k$-core estimator of $G_1$ and $G_2$ with 
that $k > 12/(2 \alpha - 1)$. Then for any $c \in (0,1)$,
\[
\p ( \widehat{\mu}_k \neq \pi_* ) \le \left( \frac{1}{1 - c} \right)^{k-1} \sum_{i = 1}^n \exp \left( - c \sum_{j = 1}^n p_{ij} s^2 \right) + o(1),
\]
where $o(1) \to 0$ as  $n \to \infty$.
\end{theorem}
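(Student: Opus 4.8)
\textbf{Proof proposal for Theorem~\ref{thm:general_exact_recovery}.}

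The plan is to combine Lemma~\ref{lemma:k_core_correctness} with a direct first-moment bound on the number of low-degree vertices in the intersection graph $G_1 \land_{\pi_*} G_2$. First I would observe that, conditioned on the parent graph and the subsampling, the intersection graph $G_1 \land_{\pi_*} G_2$ is itself an inhomogeneous random graph: each potential edge $(i,j)$ is present precisely when it survives in both $G_1$ and $G_2$, which happens with probability $p_{ij} s^2$, independently across pairs. Thus $G_1 \land_{\pi_*} G_2 \sim \inhomogeneous(\{ p_{ij} s^2 \})$, and in particular $p_{\max}^{\land} := \max_{i,j} p_{ij} s^2 \le p_{\max} \le n^{-\alpha + o(1)} = o(1/\sqrt{n})$ since $\alpha > 1/2$.

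Next, by Lemma~\ref{lemma:k_core_correctness}, with probability $1 - o(1)$ we have $\widehat{M}_k = \core_k(G_1 \land_{\pi_*} G_2)$ and $\widehat{\mu}_k\{\widehat{M}_k\} = \pi_*\{\widehat{M}_k\}$; on this event, $\widehat{\mu}_k = \pi_*$ holds if and only if $\core_k(G_1 \land_{\pi_*} G_2) = [n]$, i.e.\ the intersection graph has minimum degree at least $k$. So $\p(\widehat{\mu}_k \neq \pi_*) \le \p(\text{min degree of } G_1 \land_{\pi_*} G_2 < k) + o(1)$. The event that the minimum degree is less than $k$ is exactly the event $|Z_{k-1}| \ge 1$ for the intersection graph (using the notation~\eqref{eq:Zk}), so by Markov's inequality this is at most $\E[|Z_{k-1}|]$ for $G_1 \land_{\pi_*} G_2$. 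Applying Lemma~\ref{lemma:Zk_expectation} with the edge probabilities $p_{ij} s^2$, with parameter $k-1$ in place of $k$, and any $c \in (0,1)$, gives
\[
\E[|Z_{k-1}|] \le \left( \frac{1}{1-c} \right)^{k-1} \sum_{i=1}^n \exp\left( -c \sum_{j=1}^n p_{ij} s^2 \right),
\]
which is the claimed bound. (A small point to check: Lemma~\ref{lemma:Zk_expectation} bounds $\E[|Z_k|]$ where $Z_k$ uses $\deg_G(i) \le k$; the event ``min degree $< k$'' is ``$\exists i: \deg(i) \le k-1$'', so indexing with $k-1$ is the right choice.)

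I do not expect a serious obstacle here, since the two main ingredients are supplied as Lemma~\ref{lemma:k_core_correctness} and Lemma~\ref{lemma:Zk_expectation}; the only real content is the two structural observations — that the intersection graph is inhomogeneous with probabilities $p_{ij}s^2$, and that on the good event from Lemma~\ref{lemma:k_core_correctness} the estimator recovers $\pi_*$ exactly iff the whole vertex set forms the $k$-core. The mildest care is needed in tracking that the hypothesis $p_{\max} \le n^{-\alpha+o(1)}$ with $\alpha > 1/2$ both feeds Lemma~\ref{lemma:k_core_correctness} (via $k > 12/(2\alpha-1)$) and ensures $p_{\max} s^2 = o(1/\sqrt n)$ so that any concentration statements for the intersection graph apply — though for the first-moment bound above we only need Markov, so concentration (Lemma~\ref{lemma:Zk_concentration}) is not even required. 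Finally, Theorem~\ref{thm:exact_recovery} follows by choosing $c = c(\epsilon) \in (0,1)$ close enough to $1$ that $c(1+\epsilon) > 1$; then under~\eqref{eq:expected_degree_condition} each summand is at most $\exp(-c(1+\epsilon)\log n) = n^{-c(1+\epsilon)}$, so the sum is at most $n^{1 - c(1+\epsilon)} = o(1)$, and the constant $(1-c)^{-(k-1)}$ is harmless since $k$ is a constant.
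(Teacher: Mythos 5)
Your proposal is correct and follows essentially the same route as the paper: apply Lemma~\ref{lemma:k_core_correctness} to reduce the failure event to the intersection graph (an $\inhomogeneous(\{p_{ij}s^2\})$ graph) having minimum degree below $k$, then bound that probability by $\E[|Z_{k-1}|]$ and invoke Lemma~\ref{lemma:Zk_expectation} with parameter $k-1$. Your "Markov on $|Z_{k-1}|$" step is literally the paper's union bound over vertices, and your concluding derivation of Theorem~\ref{thm:exact_recovery} matches the paper's choice $c=(1+\epsilon)^{-1/2}$.
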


\begin{proof}
We can bound the probability of interest as follows:
\begin{align}
& \p ( \widehat{\mu}_k \neq \pi_* ) \le \p ( \core_k ( G_1 \land_{\pi_*} G_2 ) \neq [n] ) \nonumber  \\
& \hspace{0.3cm} + \p \left ( \widehat{M}_k \neq \core_k( G_1 \land_{\pi_*} G_2) \text{ or } \widehat{\mu}_k \{ \widehat{M}_k \} \neq \pi_* \{ \widehat{M}_k \} \right ) \nonumber \\
& \stackrel{(a)}{\le} \p ( \mindeg(G_1 \land_{\pi_*} G_2 ) < k ) + o(1) \nonumber \\
& \stackrel{(b)}{\le} \sum_{i = 1}^n \p ( \deg_{G_1 \land_{\pi_*} G_2} (i) < k ) + o(1)  \nonumber \\
\label{eq:exact_recovery_probability_bound}
& = \E \left[ | \{ i \in [n] : \deg_{G_1 \land_{\pi_*} G_2}(i) \le k - 1 \} | \right ] + o(1).
\end{align}
Above, $(a)$ follows from an application of Lemma \ref{lemma:k_core_correctness}, together with the observation that if the $k$-core of a graph does not encompass the entire vertex set, then the minimum degree of the graph must be less than $k$.
The inequality $(b)$ follows from a union bound.
To conclude, we use Lemma \ref{lemma:Zk_expectation} to bound the expectation in~\eqref{eq:exact_recovery_probability_bound}.
\end{proof}

% The theorem on exact recovery follows readily.

\begin{proof}[Proof of Theorem~\ref{thm:exact_recovery}]
Set $c = ( 1 + \epsilon)^{-1/2}$ in Theorem~\ref{thm:general_exact_recovery}. 
Invoking the 
assumption that for every $i \in [n]$ we have the lower bound
$\sum_{j = 1}^n p_{ij} s^2 \geq (1+\epsilon) \log n$, this shows that
\[ 
\p ( \widehat{\mu}_k \neq \pi_* ) \le \left( \frac{1}{1 - (1+\epsilon)^{-1/2}} \right)^{k-1} n^{1-\sqrt{1+\epsilon}} + o(1).
\]
This bound vanishes as $n \to \infty$ for $k$ fixed.
\end{proof}

\subsection{Partial recovery}

We start by stating a useful result of \Luczak concerning the density of small, induced subgraphs of Erd\H{o}s-R\'{e}nyi graphs. 

\begin{lemma}[\cite{Luczak1991}]
\label{lemma:luczak}
Suppose that $\gamma = o ( \sqrt{n})$ and let $G \sim \er(n, \gamma / n)$. Then, 
%it holds 
with probability $1 - o(1)$, 
%that, 
for every $S \subset [n]$ with $|S| \le \frac{3}{4 \gamma^2} n$, 
$G \{ S \}$ has at most $2 | S |$ edges.
\end{lemma}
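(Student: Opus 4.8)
\textbf{Proof proposal for Lemma~\ref{lemma:luczak} (\Luczak's lemma).}

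The plan is to run a first-moment (union bound) argument over all vertex subsets $S$ with $|S| \le \frac{3}{4\gamma^2}n$, controlling the probability that $G\{S\}$ contains more than $2|S|$ edges. Fix $s := |S|$ and set $p = \gamma/n$. The number of potential edges inside $S$ is $\binom{s}{2} \le s^2/2$, and the number of edges present is $\mathrm{Bin}(\binom{s}{2}, p)$. The event that $G\{S\}$ has more than $2s$ edges is contained in the event that there exists a set of $2s+1$ edges among the $\binom{s}{2}$ possible ones that are all present, so by a union bound
\[
\p\bigl( e(G\{S\}) > 2s \bigr) \le \binom{\binom{s}{2}}{2s+1} p^{2s+1} \le \left( \frac{e s^2 p}{2(2s+1)} \right)^{2s+1} \le \left( \frac{e s p}{4} \right)^{2s+1}.
\]
Then I would union-bound over all $\binom{n}{s} \le (en/s)^s$ choices of $S$ of size $s$, and finally sum over $s$ from $1$ (or from the smallest $s$ for which $2s+1 \le \binom{s}{2}$, i.e.\ $s \ge 5$; small $s$ are handled trivially since $G\{S\}$ cannot have more than $\binom{s}{2} \le 2s$ edges when $s \le 5$) up to $s_{\max} = \frac{3}{4\gamma^2}n$.

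The key computation is to show $\sum_{s} \binom{n}{s} \left( \frac{esp}{4} \right)^{2s+1} = o(1)$. Bounding a single term,
\[
\binom{n}{s}\left(\frac{esp}{4}\right)^{2s+1} \le \left(\frac{en}{s}\right)^s \left(\frac{es\gamma}{4n}\right)^{2s+1} = \frac{e\gamma}{4n}\cdot \left( \frac{e^3 s \gamma^2}{16 n} \right)^{s}.
\]
Here is where the hypothesis $s \le \frac{3}{4\gamma^2}n$ enters: it gives $\frac{e^3 s\gamma^2}{16n} \le \frac{3e^3}{64} < 1$, so the term is bounded by $\frac{e\gamma}{4n}\,\rho^{s}$ for a constant $\rho := 3e^3/64 < 1$, uniformly over the allowed range of $s$. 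Summing the geometric series over $s$ gives a total of at most $\frac{e\gamma}{4n}\cdot \frac{\rho}{1-\rho} = O(\gamma/n)$, which is $o(1)$ since $\gamma = o(\sqrt n)$ (indeed $\gamma/n \to 0$ whenever $\gamma = o(n)$, so the hypothesis is comfortably satisfied). Taking the complement yields the claim.

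The main obstacle — really the only delicate point — is getting the constant in the exponent to work out: one needs the per-term bound to decay geometrically in $s$, and this forces the threshold $\frac{3}{4\gamma^2}n$ (or any constant$\,\cdot\,n/\gamma^2$ with the constant small enough to beat the $e^3/16$ factor coming from the two binomial estimates). If one is sloppy with the binomial bounds the constant $3/4$ may need to be shrunk, so some care is needed to verify $3e^3/64 < 1$; in fact $3e^3/64 \approx 0.94 < 1$, which is tight enough to go through but leaves little slack. A secondary bookkeeping issue is the trivial range $s \le 5$ where the bound $e(G\{S\}) \le \binom{s}{2} \le 2s$ holds deterministically, so those terms contribute nothing and can be excluded from the sum. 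Everything else is a routine first-moment estimate.
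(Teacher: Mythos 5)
Your proof is correct, but note that the paper itself does not prove this lemma at all: it is imported as a black box from \Luczak~\cite{Luczak1991}, so there is no in-paper argument to compare against. Your first-moment computation is the standard route and all the key steps check out: the reduction of ``more than $2s$ edges'' to a union over $(2s+1)$-edge subsets, the estimate $\binom{\binom{s}{2}}{2s+1}p^{2s+1}\le(esp/4)^{2s+1}$, the trivial handling of $s\le 5$ via $\binom{s}{2}\le 2s$, and the verification that $3e^3/64<1$ at the threshold $s\le \frac{3}{4\gamma^2}n$. One small algebra slip: since $\bigl(\tfrac{en}{s}\bigr)^s\bigl(\tfrac{es\gamma}{4n}\bigr)^{2s+1}=\tfrac{es\gamma}{4n}\bigl(\tfrac{e^3 s\gamma^2}{16n}\bigr)^{s}$, the leftover prefactor is $\tfrac{es\gamma}{4n}$, not $\tfrac{e\gamma}{4n}$; this is harmless because $\sum_{s}s\rho^{s}$ still converges for $\rho=3e^3/64<1$, so the total remains $O(\gamma/n)=o(1)$. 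You are also right that the argument only needs $\gamma=o(n)$ for the sum to vanish; the hypothesis $\gamma=o(\sqrt{n})$ mainly ensures the range $|S|\le\frac{3}{4\gamma^2}n$ is nonempty (and matches how the lemma is invoked later in the paper).
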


Through a simple coupling argument, we can extend this lemma to the context of inhomogeneous random graphs.

\begin{lemma}
\label{lemma:subgraph_density}
Suppose that $p_{\max} \le \gamma / n$, where $\gamma = o ( \sqrt{n})$. Then the conclusion of Lemma \ref{lemma:luczak} holds for $G \sim \cG( \mathbf{p} )$.
\end{lemma}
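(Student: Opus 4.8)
The plan is to prove Lemma~\ref{lemma:subgraph_density} by a monotone coupling of $G \sim \cG(\mathbf{p})$ with an Erd\H{o}s--R\'enyi graph $H \sim \er(n, \gamma/n)$, to which Lemma~\ref{lemma:luczak} applies directly. First I would set $q := \gamma/n$ and observe that, by hypothesis, $p_{ij} \le p_{\max} \le q$ for every pair $i,j$. This pointwise domination of edge probabilities is exactly what is needed to couple the two graphs so that $E(G) \subseteq E(H)$ almost surely: for each unordered pair $\{i,j\}$, draw an independent uniform $U_{ij} \in [0,1]$, put $\{i,j\} \in E(H)$ iff $U_{ij} \le q$, and put $\{i,j\} \in E(G)$ iff $U_{ij} \le p_{ij}$. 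Since $p_{ij} \le q$, every edge of $G$ is an edge of $H$, and the marginals are correct: $G \sim \cG(\mathbf{p})$ and $H \sim \er(n, q)$.

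Next I would transfer the conclusion. Since $\gamma = o(\sqrt n)$, Lemma~\ref{lemma:luczak} applies to $H$: with probability $1 - o(1)$, every $S \subseteq [n]$ with $|S| \le \tfrac{3}{4\gamma^2} n$ has $e(H\{S\}) \le 2|S|$. On the event that this holds, the edge containment $E(G) \subseteq E(H)$ gives $e(G\{S\}) \le e(H\{S\}) \le 2|S|$ for every such $S$ simultaneously, because $G\{S\}$ is a subgraph of $H\{S\}$ on the same vertex set. Hence the conclusion of Lemma~\ref{lemma:luczak} holds for $G$ with probability $1 - o(1)$ as well, which is precisely the statement to be proved.

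This argument is essentially complete as sketched; there is no serious obstacle, only two small points to state carefully. The first is that the coupling must be done pairwise-independently across all $\binom n 2$ pairs so that the product structure of both $\cG(\mathbf{p})$ and $\er(n,q)$ is respected --- using a single family $\{U_{ij}\}$ of independent uniforms handles this cleanly. The second is that Lemma~\ref{lemma:luczak} is a statement about a fixed threshold $\tfrac{3}{4\gamma^2}n$ with the \emph{same} $\gamma$; since we are coupling against $\er(n,\gamma/n)$ with exactly this $\gamma$, the subgraph-size bound $|S| \le \tfrac{3}{4\gamma^2}n$ is inherited verbatim, so no loss of constants occurs in passing from $H$ to $G$.
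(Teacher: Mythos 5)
Your argument is correct and is essentially the same as the paper's proof: a monotone coupling of $G$ with $\er(n,\gamma/n)$ so that $E(G)$ is contained in the Erd\H{o}s--R\'enyi graph's edge set, followed by transferring the subgraph-density bound from Lemma~\ref{lemma:luczak}. The paper states the coupling in one line, while you spell out the construction via independent uniforms, but the substance is identical.
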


\begin{proof}
Suppose we couple $G$ with $G' \sim \cG(n, \gamma / n)$ so that $G$ is always a subgraph of $G'$. Hence, for any $S \subset [n]$ with $|S| \le \frac{3}{4 \gamma^2} n$, if $G' \{S \}$ has at most $2 |S|$ edges, then $G \{ S \}$ has at most $2 |S|$ edges as well. The desired result follows. 
\end{proof}

Let $F_k := [n] \setminus \core_k ( G)$ be the set of vertices outside the $k$-core of $G$. Recall also the definition of $Z_{k}$ from~\eqref{eq:Zk}; whenever we use $Z_{k}$ in the following, the underlying graph will be clear from context.

\begin{lemma}
\label{lemma:k_core_size_1} 
Let $G \sim \cG( \mathbf{p} )$. 
Suppose that $p_{ij} \le \gamma / n$ for all $i,j \in [n]$, with $\gamma = o (\sqrt{n})$. Suppose that the event in Lemma \ref{lemma:subgraph_density} holds and that 
$| Z_{k+1} | \le  n/ (4 \gamma^2)$. Then $|F_k| \le 3 |Z_{k + 1}|$.
%Then 
%\[
% |[n] \setminus \core_k(G) | 
%|F_{k}|
%\le 
% 3 | \{ i : \deg_G(i) \le k + 1 \} |.
%3 | Z_{k+1} |.
%\]
\end{lemma}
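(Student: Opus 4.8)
\textbf{Proof proposal for Lemma~\ref{lemma:k_core_size_1}.}
The plan is to peel off the low-degree vertices iteratively and track how the size of the discarded set grows, using the subgraph-density event of Lemma~\ref{lemma:subgraph_density} to control the growth. First I would recall the standard fact that $\core_k(G)$ can be obtained by repeatedly deleting vertices of current degree at most $k-1$; let $F_k = [n] \setminus \core_k(G)$ be the final set of deleted vertices. The key observation is that every vertex $v \in F_k$, at the moment it is deleted, has at most $k$ neighbors remaining in $[n] \setminus (\text{already deleted})$, hence at most $k$ of its neighbors lie in $\core_k(G) \cup \{\text{vertices deleted later}\}$. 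Summing over $v \in F_k$, the number of edges of $G$ with \emph{both} endpoints in $F_k$ is at most... actually a cleaner route: the number of edges internal to $F_k$ in $G\{F_k\}$ is at least $\deg_{G\{F_k\}}(v)$ summed and halved, but more usefully, each $v \in F_k$ has at most $k$ edges leaving $F_k$ is false in general — so instead I would argue that in the induced subgraph $G\{F_k\}$ every vertex has degree at most... no. Let me reset.

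The correct mechanism: run the peeling process on $G$. A vertex enters $F_k$ because at some step its residual degree drops to $\le k-1$. Consider the induced subgraph $G\{F_k\}$. I claim $\core_{k}(G\{F_k\})$ is empty — indeed if it were nonempty it would be a subgraph of $G$ with min degree $\ge k$ disjoint from $\core_k(G)$, contradicting maximality of the $k$-core (their union would still have min degree $\ge k$). So $G\{F_k\}$ has empty $k$-core, which forces $e(G\{F_k\}) < k |F_k|$ — wait, empty $k$-core only gives that the peeling on $G\{F_k\}$ removes everything, which gives $e(G\{F_k\}) \le (k-1)|F_k|$ by summing residual degrees at deletion times. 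That bound is too weak to beat the factor-$2$ density bound unless $|F_k|$ is small. So the real point must be: \emph{first} show $|F_k|$ is small (at most $n/(4\gamma^2)$, so that Lemma~\ref{lemma:subgraph_density} applies to $S = F_k$), and then combine the two edge bounds.

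Here is the step order I would follow. (1) Show $F_{k} \supseteq Z_{k+1}$ trivially is not what we want; rather show the seed of the peeling is $Z_k$ (vertices of degree $\le k-1$ in $G$ itself are removed first), and more to the point I would try to show $|F_k| \le 3|Z_{k+1}|$ \emph{directly} by a bootstrap: suppose for contradiction that at some point during peeling the discarded set first reaches size $3|Z_{k+1}| + 1$ — wait, but we also need $|F_k| \le n/(4\gamma^2)$ to invoke Lemma~\ref{lemma:subgraph_density}; since $|Z_{k+1}| \le n/(4\gamma^2)$ is assumed, $3|Z_{k+1}|$ could exceed $n/(4\gamma^2)$. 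So (2): stop the peeling the first time the discarded set $D$ would exceed $n/(4\gamma^2)$; on the density event, $e(G\{D\}) \le 2|D|$ for this $D$ with $|D| \le n/(4\gamma^2)$. (3) On the other hand, every vertex of $D \setminus Z_{k+1}$ had degree $\ge k+1$ in $G$ but residual degree $\le k-1$ when deleted, so it lost $\ge 2$ edges to earlier-deleted vertices, i.e. it has $\ge 2$ neighbors inside $D$; summing, $e(G\{D\}) \ge 2(|D| - |Z_{k+1}|) / 2 \cdot$ — more carefully, $\sum_{v \in D}\deg_{G\{D\}}(v) \ge 2(|D|-|Z_{k+1}|)$ is wrong too since a vertex in $Z_{k+1}$ contributes $\ge 0$; so $2 e(G\{D\}) = \sum_{v\in D}\deg_{G\{D\}}(v) \ge 2(|D| - |Z_{k+1}|)$, giving $e(G\{D\}) \ge |D| - |Z_{k+1}|$. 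Combining with step (2): $|D| - |Z_{k+1}| \le 2|D|$, which is vacuous. The bound $\ge 2$ per vertex must be used better: actually $\deg_{G\{D\}}(v) \ge 2$ for each $v \in D\setminus Z_{k+1}$ gives $2e(G\{D\}) \ge 2(|D|-|Z_{k+1}|)$ only, same thing. So I need the stronger count that vertices deleted late in the process have \emph{many} neighbors in $D$; but the peeling only guarantees residual degree $\le k-1$, losing at least $2$ among the $\ge k+1$ original — that is a loss of $\ge 2$, but the $k-1$ that remain are also (eventually) in $D$! So in fact each $v \in D\setminus Z_{k+1}$ has \emph{all} $\ge k+1$ of its $G$-neighbors... no, only those that end up in $D$. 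At least the $\ge 2$ deleted-earlier ones are in $D$, and this is all peeling gives. The resolution: we do not fix $D$, we let $D = F_k$ and use that $G\{F_k\}$ has empty $k$-core so $e(G\{F_k\}) \le (k-1)|F_k|$; combined with $e(G\{F_k\}) \ge |F_k| - |Z_{k+1}|$ this is still vacuous.

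I think the intended argument is cleaner and I would structure it thus: define $D = F_k \setminus Z_{k+1}$ restricted suitably, and note each $v \in D$ has $\ge 2$ neighbors in $F_k$ \emph{that were deleted before it}; charge $v$ to such a pair and observe that $G\{F_k'\}$ for the truncated set $F_k' = F_k$ if $|F_k| \le n/(4\gamma^2)$ (which we get \emph{a posteriori}, arguing by minimal counterexample along the peeling order) has, by Lemma~\ref{lemma:subgraph_density}, at most $2|F_k'|$ edges, while the charging gives at least $2|F_k \setminus Z_{k+1}|$ ``slots'' but each edge is a deleted-earlier witness for at most... each edge $\{u,w\}$ with $w$ deleted first can witness only $u$, so edges $\ge |F_k \setminus Z_{k+1}|$, i.e. $e(G\{F_k\}) \ge 2(|F_k| - |Z_{k+1}|)$ — this requires $2$ distinct witness edges per vertex, each edge witnessing one vertex, so $e(G\{F_k\}) \ge 2(|F_k|-|Z_{k+1}|)$; hmm but an edge $\{u,w\}$ internal to $F_k$, with say $u$ deleted before $w$: it's a ``deleted-earlier'' edge for $w$ but not for $u$, so it's counted once. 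Each $w \in F_k\setminus Z_{k+1}$ needs $\ge 2$ such, so $e(G\{F_k\}) \ge 2(|F_k| - |Z_{k+1}|)$. Then $2|F_k| \ge e(G\{F_k\}) \ge 2(|F_k|-|Z_{k+1}|)$ — still vacuous! Unless the density bound is $e(G\{S\}) \le 2|S|$ is an \emph{over}count and we want it with a smaller constant, or unless vertices in $Z_{k+1}$ also get $\ge 1$ witness edge (those of degree exactly $k$, say, but $Z_{k+1}$ includes degree-$0$ vertices). I evidently have the inequality direction of the density lemma working against me, so the true argument must restrict to a \emph{carefully chosen} subset $S$ of size about $n/(4\gamma^2)$ where the deletion has accumulated — i.e., \textbf{the main obstacle is precisely getting the constant right}, and I would resolve it by applying Lemma~\ref{lemma:subgraph_density} to the set $S$ consisting of the first $\min(|F_k|, \lfloor n/(4\gamma^2)\rfloor)$ vertices deleted, showing that if $|F_k| > 3|Z_{k+1}|$ then already when $|S| = 3|Z_{k+1}| \le n/(4\gamma^2)$... no wait, $3|Z_{k+1}|$ might exceed $n/(4\gamma^2)$. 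Then take $S$ = first $\lceil n/(4\gamma^2)\rceil$ deleted if $3|Z_{k+1}| \ge n/(4\gamma^2)$; inside $S$, all but $|Z_{k+1}|$ vertices have $\ge 2$ back-edges into $S$, giving $e(G\{S\}) \ge 2(|S|-|Z_{k+1}|) \ge 2(|S| - n/(12\gamma^2)) = 2|S| - n/(6\gamma^2) \cdot$ — and Lemma~\ref{lemma:subgraph_density} says $e(G\{S\}) \le 2|S|$, still consistent. I am clearly missing the sharpening (perhaps each late vertex has $\ge k-1 \ge 2$ remaining neighbors \emph{also} in $F_k$, for a total of $\ge k+1$ not $\ge 2$, which with $k \ge 2$ and the factor-$2$ bound gives $(k+1)(|S|-|Z_{k+1}|)/2 \le$ wait back-edges only: $\ge 2$ back + the $\le k-1$ forward are in $S$ only if deleted within $S$-window — not guaranteed). \textbf{So the honest summary: the approach is peeling + Lemma~\ref{lemma:subgraph_density}, the one genuinely delicate point is extracting, for the appropriately truncated prefix $S$ of the deletion order, an edge lower bound of the form $e(G\{S\}) \ge (2+\delta)(|S| - |Z_{k+1}|)$ that contradicts $e(G\{S\}) \le 2|S|$ once $|S| > 3|Z_{k+1}|$ — this needs each non-$Z_{k+1}$ vertex of $S$ to have strictly more than two neighbors inside $S$ on average, which one gets by also counting forward-neighbors that remain until the truncation point, and bounding their number from below via the fact that outside $Z_{k+1}$ a vertex starts with degree $\ge k+1 \ge 14$.} I would carry that out carefully, then conclude $|F_k| \le 3|Z_{k+1}|$.
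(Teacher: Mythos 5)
There is a genuine gap, and it is an off-by-one error that you came tantalizingly close to seeing. By definition \eqref{eq:Zk}, $Z_{k+1}=\{i:\deg_G(i)\le k+1\}$, so a vertex \emph{outside} $Z_{k+1}$ has degree at least $k+2$ in $G$, not $k+1$ as you assume throughout. In your peeling argument, such a vertex is deleted only when its residual degree has dropped to at most $k-1$, so it has at least \emph{three} neighbors among the earlier-deleted vertices, not two. Each such back-edge witnesses exactly one vertex (its later-deleted endpoint), so for any prefix $S$ of the deletion order you get $e(G\{S\})\ge 3\,(|S|-|Z_{k+1}|)$; combined with the density bound $e(G\{S\})\le 2|S|$ from Lemma~\ref{lemma:subgraph_density} (applicable since $3|Z_{k+1}|\le \tfrac{3}{4\gamma^2}n$), this gives $|S|\le 3|Z_{k+1}|$ and closes the argument. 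Every one of your ``vacuous'' computations is vacuous precisely because you used the constant $2$ where the hypothesis hands you a $3$; the statement is phrased in terms of $Z_{k+1}$ rather than $Z_k$ exactly to buy that extra unit. Your proposed repair at the end (counting forward-neighbors that survive to the truncation point) is not the right fix and would not obviously work, since those neighbors need not lie in the deleted set.

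For comparison, the paper does not use the peeling order at all. It runs a forward expansion from $U_0:=Z_{k+1}$: repeatedly adjoin any vertex with at least $3$ neighbors in the current set $U_m$, stopping when no such vertex exists. Each step adds at least $3$ internal edges, so if the process ever reached size $3|U_0|$ the set would contain at least $3\cdot 2|U_0|=2\cdot(3|U_0|)$ edges, contradicting Lemma~\ref{lemma:subgraph_density}; hence the final set $U_\ell$ has $|U_\ell|\le 3|Z_{k+1}|$. Then every $v\notin U_\ell$ has $\deg_G(v)\ge k+2$ and at most $2$ neighbors in $U_\ell$, so $G\{V\setminus U_\ell\}$ has minimum degree at least $k$ and therefore $F_k\subseteq U_\ell$. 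This sidesteps the truncation bookkeeping you wrestled with, but it relies on the same ``$\ge 3$ versus $\le 2$'' dichotomy that your draft never isolates. As written, your proposal does not constitute a proof.
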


The proof follows using similar methods as \Luczak~\cite{Luczak1991}; we defer it to Appendix~\ref{sec:Luczak}.

% \begin{proof}
% The proof follows using similar methods as \Luczak~\cite{Luczak1991}.
% Define a sequence of subsets of vertices $\{ U_m \}_{m \ge 0}$ as follows. Let $U_0 := Z_{k+1}$. For $m \ge 0$, if there exists $v \in V \setminus U_m$ with at least $3$ neighbors in $U_m$, then let $U_{m + 1} : = U_m \cup \{ v \}$. If no such vertex exists, we stop the construction. 

% Let $U_\ell$ be the final set of the construction, and let us assume by way of contradiction that $| U_\ell| > 3 | U_0|$. Then there must exist $0 \le b \le \ell$ such that $|U_b| = 3 | U_0|$, as only a single vertex is added per iteration of the construction. Letting $E_b$ denote the number of edges in $G \{ U_b \}$, we have that 
% \[
% E_b \ge 3 ( | U_b | - | U_0 | ) = 2 | U_b|.
% \]
% Since $| U_b | = 3 | U_0 | \le \frac{3}{4 \gamma^2 } n$, the display above contradicts the event in Lemma \ref{lemma:subgraph_density}. Hence $| U_\ell | \le 3 | U_0 | = 3 |Z_{k+1}|$. 

% Finally, we connect this analysis to the $k$-core of $G$ by noting that, if $v \in V \setminus U_\ell$, then $\deg_G(v) \ge k + 2$ and $v$ has at most $2$ neighbors in $U_\ell$. As a result, the minimum degree in $G \{ V \setminus U_\ell \}$ is at least $k$, 
% so 
% $%[n] \setminus \core_k(G) 
% F_{k} \subseteq U_\ell$.
% \end{proof}

\begin{lemma}
\label{lemma:k_core_expectation_bound}
Assume that $p_{\max} = o(n^{-7/8})$; that is, assume that $\gamma = o(n^{1/8})$. Assume also that $ \E \left [ | Z_{k+1} | \right ] \le n/(5 \gamma^2)$.
%\[
% \E \left [ | Z_{k+1} | \right ] \le \frac{n}{5 \gamma^2}.
%\]
Then, with probability $1 - o(1)$, it holds that 
\[
%|[n] \setminus \core_k(G) | 
|F_{k}| 
\le 3\E \left [ | Z_{k+1} | \right ] + n^{3/4}.
\]
\end{lemma}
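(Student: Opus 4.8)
The plan is to combine the deterministic bound of Lemma~\ref{lemma:k_core_size_1} with the concentration estimate for $|Z_{k+1}|$ from Lemma~\ref{lemma:Zk_concentration}, using Lemma~\ref{lemma:subgraph_density} to supply the good event needed by Lemma~\ref{lemma:k_core_size_1}. First I would set $\gamma := n p_{\max}$, so that $p_{ij} \le \gamma/n$ for all $i,j$, and note that the hypothesis $p_{\max} = o(n^{-7/8})$ translates to $\gamma = o(n^{1/8})$; in particular $\gamma = o(\sqrt n)$, so Lemma~\ref{lemma:subgraph_density} applies and the event in Lemma~\ref{lemma:luczak} (equivalently Lemma~\ref{lemma:subgraph_density}) holds with probability $1-o(1)$. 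Call this event $\mathcal{E}_1$.

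Next I would introduce the concentration event $\mathcal{E}_2 := \{ \, \big| |Z_{k+1}| - \E|Z_{k+1}| \big| \le \tfrac13 n^{3/4} \, \}$, which by Lemma~\ref{lemma:Zk_concentration} (valid since $p_{\max} = o(n^{-7/8}) = o(n^{-1/2})$) has probability $1-o(1)$. On $\mathcal{E}_2$ we get $|Z_{k+1}| \le \E|Z_{k+1}| + \tfrac13 n^{3/4}$. The remaining point is to check that on $\mathcal{E}_2$ the size condition $|Z_{k+1}| \le n/(4\gamma^2)$ required by Lemma~\ref{lemma:k_core_size_1} is met: by assumption $\E|Z_{k+1}| \le n/(5\gamma^2)$, and since $\gamma = o(n^{1/8})$ we have $\gamma^2 = o(n^{1/4})$, hence $\tfrac13 n^{3/4} = o(n/\gamma^2)$; therefore for $n$ large, $|Z_{k+1}| \le n/(5\gamma^2) + \tfrac13 n^{3/4} \le n/(4\gamma^2)$.

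Finally, on $\mathcal{E}_1 \cap \mathcal{E}_2$ (which has probability $1-o(1)$ by a union bound) all hypotheses of Lemma~\ref{lemma:k_core_size_1} are satisfied, so $|F_k| \le 3|Z_{k+1}| \le 3\E|Z_{k+1}| + n^{3/4}$, which is the claimed bound. The only mildly delicate step is the arithmetic verifying $|Z_{k+1}| \le n/(4\gamma^2)$ on $\mathcal{E}_2$ — one must use both the slack between the $1/5$ and $1/4$ constants and the fact that the fluctuation term $n^{3/4}$ is of lower order than $n/\gamma^2$ precisely because $\gamma = o(n^{1/8})$; this is exactly why the hypothesis is stated as $p_{\max} = o(n^{-7/8})$ rather than merely $o(n^{-1/2})$. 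Everything else is a direct invocation of the cited lemmas.
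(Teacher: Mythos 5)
Your proposal is correct and follows essentially the same route as the paper's proof: apply the concentration bound of Lemma~\ref{lemma:Zk_concentration} to get $|Z_{k+1}| \le \E[|Z_{k+1}|] + \tfrac{1}{3}n^{3/4} \le n/(4\gamma^2)$ using $\gamma = o(n^{1/8})$, then invoke Lemma~\ref{lemma:k_core_size_1}. Your write-up is in fact slightly more explicit than the paper's, since you spell out that the high-probability event of Lemma~\ref{lemma:subgraph_density} must also be intersected in before Lemma~\ref{lemma:k_core_size_1} can be applied.
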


\begin{proof}
%Recall the definition of $Z_k$ from \eqref{eq:Zk}.
Lemma~\ref{lemma:Zk_concentration} implies that, 
with probability $1 - o(1)$, 
we have that $3 |Z_{k+1} | \le 3 \E [ |Z_{k+1}| ] + n^{3/4}$, and 
\[
|Z_{k+1}| 
\le \E [ |Z_{k+1} |] + \frac{n^{3/4}}{3} 
\le \frac{n}{5 \gamma^2} + \frac{n^{3/4}}{3} 
\le \frac{n}{4 \gamma^2},
\]
where we have used that $\gamma = o ( n^{1/8})$ in the final inequality. The desired result follows from applying Lemma~\ref{lemma:k_core_size_1}.
\end{proof}

We are now ready to prove the main result on partial graph matching.

\begin{proof}[Proof of Theorem \ref{thm:partial_matching}]
As a shorthand, define $\dmin : = \min_{i \in [n]} \sum_{j = 1}^n p_{ij}$.
By Lemma~\ref{lemma:k_core_correctness}, 
it suffices to show that 
\begin{equation}
\label{eq:core_lower_bound}
|\core_{k} ( G_{1} \land_{\pi_{*}} G_{2} )| 
\geq 
\left( 1  - 3e^{- \dmin s^2 / 7} \right) n - o(n)
\end{equation}
holds with high probability. 
By Lemma~\ref{lemma:k_core_expectation_bound} 
it thus suffices to bound the expected number of vertices with degree at most $k+1=14$ in $G_1 \land_{\pi_*} G_2$. 
By setting $c = 1/2$ in Lemma \ref{lemma:Zk_expectation}, this is at most 
\begin{multline*}
2^{k + 1}  \sum_{i = 1}^n \exp \left( - \frac{1}{2} \sum_{j = 1}^n p_{ij} s^2 \right)  \le n 2^{k + 1} \exp \left( - \frac{1}{2} d_{\min} s^2 \right) \\
 \le n \exp \left\{ - \left( \frac{1 - \log 2}{2} \right) d_{\min} s^2 \right \} \le n \exp \left ( - \frac{d_{\min} s^2}{7} \right ).
\end{multline*}
Above, the second inequality uses our assumption that $\dmin s^2 \ge 2(k + 1)$,
and the final inequality lower bounds $(1 - \log 2)/2$ by $1/7$. 

Next, in light of the display above, the conclusions of Lemma~\ref{lemma:k_core_expectation_bound} hold when
\begin{equation}
\label{eq:R_condition_1}
\exp \left ( - \frac{\dmin s^2}{7} \right ) \le \frac{1}{5 ( n p_{\max} s^2 )^2 } =  \frac{ ((n - 1)/n)^2}{5 R^2 ( \dmin s^2)^2},
\end{equation}
where the parameter $\gamma$ in Lemma \ref{lemma:k_core_expectation_bound} is taken to be $n p_{\max}$,
and the final equality in the display above follows from the definition of $R$.
To simplify the condition in \eqref{eq:R_condition_1}, we make a few observations. First, for $n$ sufficiently large and $\dmin s^2 \ge R$, \eqref{eq:R_condition_1} is satisfied if 
\begin{equation}
\label{eq:R_condition_2}
\exp \left ( - \frac{\dmin s^2}{7} \right ) \le \frac{ 1}{6 ( \dmin s^2 )^4}.
\end{equation}
Next, it can be seen numerically that $e^{- x/7} \le 1/(6 x^4)$ when $x \ge 154$.
As a result, for $n$ sufficiently large and $\dmin s^2 \ge \max \{ R, 154 \}$, the condition \eqref{eq:R_condition_2} is satisfied, and in light of Lemma \ref{lemma:k_core_expectation_bound}, it follows that $|F_k| \le 3 e^{- \dmin s^2 / 7} + o(n)$.
The bound in~\eqref{eq:core_lower_bound} readily follows.
\end{proof}

\section{Conclusion and discussion}\label{sec:discussion}

In this paper we have initiated the systematic study of graph matching for correlated inhomogeneous random graphs. Our main results show that the $k$-core estimator has desirable information-theoretic guarantees, both in the general setting and for several well-studied specific models.

Our work opens up many questions for future research. 
A specific open question is whether we can relax the assumption on $p_{\max}$ by allowing $p_{\max} \leq n^{-\alpha+o(1)}$ with $\alpha \leq 1/2$. 
A very interesting future direction is to develop 
\emph{model-specific} converse bounds that adapt to the heterogeneity of the underlying model. 
Finally, 
can the recent breakthroughs in computationally efficient estimators for the Erd\H{o}s--R\'enyi case~\cite{mao2021exact,mao2022otter} be extended to the inhomogeneous case?

\bibliographystyle{abbrv}
\bibliography{citations}

%%%%% APPENDIX 

\newpage 

\appendices

\section{Proof of Lemma \ref{lemma:k_core_correctness}}\label{sec:proof_k-core_lemma}

For a matching $(M, \mu)$, define 
\[
f(M, \mu, G_1, G_2, \pi_*) : = \sum_{i \in M : \mu(i) \neq \pi_*(i)} \deg_{G_1 \land_\mu G_2}(i).
\]
For brevity, we 
%will also 
write $f(\mu)$ instead of $f(M, \mu, G_1, G_2, \pi_*)$. In words, $f(\mu)$ denotes the sum of the degrees of vertices in $G_1 \land_{\mu} G_2$ that are incorrectly matched by $\mu$.

\begin{definition}[Weak $k$-core matching]
A matching $(M, \mu)$ is a \emph{weak $k$-core matching} if $f(\mu) \ge k | \{ i \in M : \mu(i) \neq \pi_*(i) \} |$. 
\end{definition}

Stated informally, if $(M,\mu)$ is a weak $k$-core matching, then the average degree in the incorrectly matched region of $\mu$ is at least $k$. 
Notice that if $(M,\mu)$ is a $k$-core matching then it is also a weak $k$-core matching, but the other direction does not necessarily hold. 

We next define the useful notion of a \emph{maximal matching}.

\begin{definition}[$\pi_*$-maximal matching]
A matching $(M, \mu)$ is \emph{$\pi_*$-maximal} if, for every $i \in [n]$, either $i \in M$ or $\pi_*(i) \in \mu(M)$, where $\mu(M)$ is the image of $M$ under $\mu$. Furthermore, we let $\cM(d)$ be the set of $\pi_*$-maximal matchings which make $d$ errors (i.e., $| \{i \in M : \mu(i) \neq \pi_*(i) \}| = d$).
\end{definition}

The following lemma, the proof of which can be found in \cite{GRS22, cullina2020partial}, provides a generic sufficient condition for the correctness of \emph{any} $k$-core matching. 

\begin{lemma}
\label{lemma:generic_correctness}
Let $(G_1, G_2)$ be a pair of random graphs on the vertex set $[n]$ with ground-truth matching $\pi_*$. For any positive integer $k$, define the quantity 
\[
\xi : = \max_{1 \le d \le n} \max_{(M, \mu) \in \cM(d)} \p( f(\mu) \geq kd )^{1/d}.
\]
Let $(\widehat{M}_k, \widehat{\mu}_k)$ be the $k$-core estimator of $G_1$ and $G_2$. Then
\begin{align*}
\p  \left( \widehat{M}_k = \core_k ( G_1 \land_{\pi_*} G_2) \text{ and } \widehat{\mu}_k \{ \widehat{M}_k \} = \pi_* \{ \widehat{M}_k \} \right) &\\
\ge 2 -  \exp & ( n^2 \xi).
\end{align*}
\end{lemma}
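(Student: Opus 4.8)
The plan is to prove Lemma~\ref{lemma:generic_correctness} via a first-moment (union bound) argument over $\pi_*$-maximal matchings that make at least one error, combined with the combinatorial fact that if the $k$-core estimator fails to coincide with the true matching on $\core_k(G_1 \land_{\pi_*} G_2)$, then there must exist a $\pi_*$-maximal $k$-core matching making $d \ge 1$ errors. First I would argue the reduction: the $k$-core estimator outputs a $k$-core matching of maximum size, and the true matching restricted to $\core_k(G_1\land_{\pi_*}G_2)$ is itself a $k$-core matching; by a maximality/extension argument one can assume without loss of generality that the relevant competitor matchings are $\pi_*$-maximal (any $k$-core matching can be extended to a $\pi_*$-maximal one without decreasing the number of errors or destroying the $k$-core property on the erroneous part). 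Hence the failure event is contained in the union over $d \ge 1$ and over $(M,\mu) \in \cM(d)$ of the event that $(M,\mu)$ is a $k$-core matching, which in particular forces $f(\mu) \ge kd$ since every one of the $d$ incorrectly matched vertices has degree at least $k$ in $G_1 \land_\mu G_2$.

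Next I would run the union bound: the failure probability is at most $\sum_{d=1}^{n} \sum_{(M,\mu)\in\cM(d)} \p(f(\mu) \ge kd)$. The number of $\pi_*$-maximal matchings making exactly $d$ errors is crudely at most $n^{2d}$ (choose the $d$ mismatched source vertices and their $d$ images, the rest being forced by $\pi_*$-maximality, up to a constant-factor overcount that is still dominated by $n^{2d}$) — this is where the $n^2$ in the exponent $\exp(n^2\xi)$ will come from. Using the definition of $\xi$, $\p(f(\mu)\ge kd) \le \xi^d$ for every $(M,\mu)\in\cM(d)$, so the sum is bounded by $\sum_{d\ge 1} n^{2d}\xi^d = \sum_{d \ge 1}(n^2\xi)^d$. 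Summing the geometric series (valid when $n^2 \xi < 1$, which is the regime of interest; otherwise the claimed bound $2 - \exp(n^2\xi)$ is vacuous since the right side is negative) gives $\frac{n^2\xi}{1 - n^2\xi} \le \exp(n^2\xi) - 1$ after a standard inequality $\frac{x}{1-x}\le e^x - 1$ for $x \in [0,1)$, or even more simply bounding the geometric tail. Therefore the success probability is at least $1 - (\exp(n^2\xi)-1) = 2 - \exp(n^2\xi)$, as claimed.

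The main obstacle I anticipate is the reduction step, i.e., carefully justifying that the failure event of the $k$-core estimator is captured by the existence of an erroneous $\pi_*$-maximal $k$-core matching with the degree lower bound $f(\mu)\ge kd$ intact. One has to handle the fact that the $k$-core estimator maximizes size rather than directly certifying errors, and argue that whenever $\widehat{M}_k \ne \core_k(G_1\land_{\pi_*}G_2)$ or $\widehat\mu_k$ disagrees with $\pi_*$ on $\widehat{M}_k$, one can extract from $\widehat\mu_k$ (or from the symmetric difference with $\pi_*$) a $\pi_*$-maximal matching whose erroneous vertices each still have degree $\ge k$ in the corresponding intersection graph. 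This is exactly the content worked out in \cite{cullina2020partial, GRS22}, so I would cite those references for the structural lemma and focus the written proof on the union-bound computation, which is short and self-contained once the reduction is in place. The remaining steps — counting $\cM(d)$, applying the definition of $\xi$, and summing the geometric series — are routine.
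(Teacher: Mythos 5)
Your overall strategy---reduce failure of the $k$-core estimator to the existence of a $\pi_*$-maximal (weak) $k$-core matching with $d\ge 1$ errors, then union bound over $\cM(d)$ using the definition of $\xi$---is exactly the argument of the references the paper itself defers to (\cite[Lemma~19 and Corollary~20]{GRS22}, originally \cite{cullina2020partial}); the paper gives no self-contained proof, so your reconstruction of the reduction is on target, and your handling of it (extension to a $\pi_*$-maximal matching preserves the error count and can only increase $f(\mu)$, and a $k$-core matching with $d$ errors automatically satisfies $f(\mu)\ge kd$) is correct.

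However, the final computation contains a genuine error: the inequality $\frac{x}{1-x}\le e^{x}-1$ is false for $x\in(0,1)$; comparing series term by term, $\sum_{d\ge 1}x^{d}\ge\sum_{d\ge 1}x^{d}/d!$, so the reverse inequality holds. Consequently, the crude count $|\cM(d)|\le n^{2d}$ together with the geometric series only yields a success probability of at least $1-\frac{n^{2}\xi}{1-n^{2}\xi}$, which is strictly \emph{weaker} than the claimed $2-\exp(n^{2}\xi)$. To recover the stated bound you need the sharper count $|\cM(d)|\le\binom{n}{d}^{2}d!\le n^{2d}/d!$ (choose the $d$ mismatched vertices, the $d$-element image set, and the bijection between them; the correctly matched part is then forced by $\pi_*$-maximality). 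With this, the union bound becomes $\sum_{d=1}^{n}\frac{(n^{2}\xi)^{d}}{d!}\le e^{n^{2}\xi}-1$, which gives the lemma exactly. This is a one-line fix, but as written your last step does not establish the claimed inequality.
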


In words, $\xi$ is the probability that a given maximal matching is a weak $k$-core matching, normalized by the number of errors made by $(M, \mu)$.
Crucially, Lemma~\ref{lemma:generic_correctness} shows that if $\xi = o(n^{-2})$, then the $k$-core estimator will be correct with high probability. Lemma~\ref{lemma:generic_correctness} was first proved by Cullina, Kiyavash, Mittal, and Poor~\cite{cullina2020partial} for the case of correlated Erd\H{o}s-R\'{e}nyi graphs. It was later noted by Gaudio, R\'{a}cz, and Sridhar~\cite{GRS22} that the proof in~\cite{cullina2020partial} readily extends to generic pairs of correlated random graphs $(G_1, G_2)$. For a proof of Lemma \ref{lemma:generic_correctness}, we defer the reader to \cite[Lemma~19 and Corollary~20]{GRS22}.

We proceed by bounding $\xi$, which is done formally in the following lemma. We remark that the proof closely follows~\cite[Lemma~23]{GRS22}, but we include it here for completeness.

\begin{lemma}
\label{lemma:f_probability_bound}
Let $p_{\max} : = \max_{i,j \in [n]} p_{ij}$.
For any matching $(M, \mu) \in \cM(d)$ and any $\theta > 0$, we have that
\begin{multline*}
\p( f(\mu) \ge kd)  \\
\le 3 \exp \left \{ - d \left( \theta k - e^{2 \theta}  p_{\max} s^2 - n e^{6 \theta} p_{\max}^2 s^2  \right) \right \}.
\end{multline*}
\end{lemma}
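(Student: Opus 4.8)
The plan is to bound $\p(f(\mu) \ge kd)$ via an exponential (Chernoff) inequality. Fix a $\pi_*$-maximal matching $(M,\mu) \in \cM(d)$, and let $W := \{ i \in M : \mu(i) \neq \pi_*(i) \}$ be the set of $d$ incorrectly matched vertices. Since $f(\mu) = \sum_{i \in W} \deg_{G_1 \wedge_\mu G_2}(i)$, the identity $\sum_{i \in W} \deg(i) = 2 e(G_1 \wedge_\mu G_2 \{W\}) + e(W, M\setminus W)$ lets us write $f(\mu)$ as a sum of edge-indicators: each edge inside $W$ contributes~$2$, each edge from $W$ to the correctly matched part contributes~$1$. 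First I would apply Markov's inequality to $e^{\theta f(\mu)}$ for $\theta > 0$, so that $\p(f(\mu) \ge kd) \le e^{-\theta k d}\, \E[e^{\theta f(\mu)}]$.

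The key step is to bound $\E[e^{\theta f(\mu)}]$ by exploiting independence of the edge indicators across disjoint vertex pairs. Each indicator $\mathbf 1\{(i,j) \in G_1 \wedge_\mu G_2\}$ is Bernoulli with parameter $p_{ij}^{(1)} p_{\mu(i)\mu(j)}^{(2)}$ — but since $G_1$ and $G_2$ are $s$-subsamplings of a common parent with edge probabilities $\mathbf p$, this is at most $p_{\max} s^2$ whenever the two relevant parent-edges are distinct, and at most $p_{\max} s$ (crudely $\le p_{\max} s^2 \cdot s^{-1}$, which we won't need) in the coincidental case where $\{i,j\}=\{\mu(i),\mu(j)\}$ as unordered pairs. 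For a single independent Bernoulli$(q)$ indicator $X$ with multiplier $c \in \{1,2\}$, $\E[e^{\theta c X}] = 1 + q(e^{\theta c}-1) \le \exp(q(e^{2\theta}-1)) \le \exp(q e^{2\theta})$. The edges inside $W$ (there are at most $\binom{d}{2}$ of them, but more usefully we should count them by a first-moment / degree argument) and the edges from $W$ outward need to be organized into independent groups. The cleaner route, following \cite[Lemma~23]{GRS22}: partition the at-most-$d$ "doubly-counted" directions and bound the product; the cross-edges from $W$ to $M \setminus W$ number at most $d(n-d) \le dn$, each contributing a factor $\le \exp(p_{\max} s^2 e^{2\theta})$, giving a factor $\exp(d \cdot e^{2\theta} p_{\max} s^2 \cdot \text{(something)})$ — but the displayed bound has $n e^{6\theta} p_{\max}^2 s^2$ attached to the $W$-internal edges (note $p_{\max}^2$, reflecting that an internal edge requires two parent-edges present but one may be a coincidence so only an $s$ rather than $s^2$, yet squared probability), and the factor $3$ in front presumably comes from handling the small number of coincidental pairs separately. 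I would track: (i) cross-edges $W \to M\setminus W$: at most $dn$ of them, each Bernoulli$(\le p_{\max}s^2)$ with multiplier $1$; (ii) internal edges of $W$: the count of such edges present is itself random, but a maximal matching with $d$ errors forces each $i \in W$ to have $\mu(i)$ matched to the image of some vertex, so there are structural constraints; here the $p_{\max}^2$ and the $e^{6\theta}$ (i.e. multiplier up to $3$, squared via $6 = 2\cdot 3$) come from a union/counting bound over which internal edges appear.

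The main obstacle will be the careful bookkeeping of (a) which edge-indicators are mutually independent — the pairs $(i,j)$ and $(\mu(i),\mu(j))$ in $G_2$ may coincide or overlap across different $i,j \in W$, so one must identify a large independent sub-collection and absorb the rest into the constant factor $3$ — and (b) correctly propagating the $s^2$ versus $s$ and the $p_{\max}$ versus $p_{\max}^2$ distinctions through the coincidental cases. Once $\E[e^{\theta f(\mu)}] \le 3 \exp\!\big(d\,(e^{2\theta} p_{\max} s^2 + n e^{6\theta} p_{\max}^2 s^2)\big)$ is established, combining with Markov gives exactly
\[
\p(f(\mu) \ge kd) \le 3 \exp\!\Big\{ - d\big( \theta k - e^{2\theta} p_{\max} s^2 - n e^{6\theta} p_{\max}^2 s^2 \big) \Big\},
\]
as claimed, and this will feed directly into bounding $\xi^d \le \p(f(\mu)\ge kd)$ in Lemma~\ref{lemma:generic_correctness}. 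I expect the proof to mirror \cite[Lemma~23]{GRS22} closely, with the inhomogeneity handled simply by replacing every edge probability with its uniform upper bound $p_{\max}$.
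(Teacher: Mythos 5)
Your overall strategy (a Chernoff bound on $e^{\theta f(\mu)}$, splitting the edge indicators into independent groups, and stochastically dominating each group by a binomial) is the same as the paper's, and you correctly guess that the argument mirrors \cite[Lemma~23]{GRS22}. However, the bookkeeping you propose would not produce the stated bound, and the two places where you are vague are exactly the two nontrivial points. First, the correct dichotomy is not ``cross-edges from $W$ to $M\setminus W$'' versus ``internal edges of $W$''; it is whether $\{\mu(i),\mu(j)\}=\{\pi_*(i),\pi_*(j)\}$ or not. In the former case (the set $\cB(\mu)$ in the paper) the indicator $A_{ij}B_{\mu(i)\mu(j)}$ involves a \emph{single} parent edge, is distributed as $\mathrm{Bern}(p_{ij}s^2)$, and there are at most $d$ such pairs; in the latter case (the set $\cC(\mu)$) \emph{two distinct} parent edges must both be present and survive subsampling, so the indicator is dominated by $\mathrm{Bern}(p_{\max}^2 s^2)$, and there are at most $dn$ such pairs. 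Every cross-edge from $W$ to $M\setminus W$ falls in the second class (since $\mu(j)=\pi_*(j)$ and $\mu(i)\neq\pi_*(i)$ force the image pair to differ from the true pair, by injectivity), so your item (i) --- ``at most $dn$ cross-edges, each $\mathrm{Bern}(\le p_{\max}s^2)$'' --- assigns the wrong parameter: carried through the Chernoff computation it yields a term of order $n e^{\theta}p_{\max}s^2$ inside the exponent, which does not match the claimed bound and is useless in the regime $np_{\max}s^2=\Theta(\log n)$ where the lemma is applied.

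Second, the factor $3$ and the exponent $6\theta$ do not come from ``handling the small number of coincidental pairs separately.'' They come from the $\cC(\mu)$ terms: these summands are \emph{not} mutually independent, but the dependency graph on them has maximum degree $2$ (a term $A_{ij}B_{\mu(i)\mu(j)}$ can be correlated with at most two other terms, via conditions \eqref{eq:edge_cond_1} and \eqref{eq:edge_cond_2}), hence is $3$-colorable. Writing $X_{\cC}\le 2(X_{\cC_1}+X_{\cC_2}+X_{\cC_3})$ with each color class a sum of independent indicators, a union bound over the three classes gives the prefactor $3$, and bounding the event $\{2X_{\cB}+6X_{\cC_m}\ge kd\}$ produces the multipliers $2\theta$ and $6\theta$ in the moment generating functions. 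Without this decoupling step your plan has no justification for applying the product formula for the MGF to the $\cC(\mu)$ terms. These are genuine gaps rather than notational slips: the classification of pairs by whether the image pair coincides with the true pair, and the $3$-coloring of the dependency graph, are the substance of the proof.
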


\begin{proof}
Define the following sets:
\begin{align*}
\cA(\mu) & : = \{ (i,j) \in M^2 : \mu(i) \neq \pi_*(i) \} \\ 
\cB(\mu) & : = \{ (i,j) \in \cA(\mu) : \mu(i) = \pi_*(j) \text{ and } \mu(j) = \pi_*(i) \} \\
\cC(\mu) & : = \cA(\mu) \setminus \cB(\mu).
\end{align*}
We make a few remarks about these sets. First, since $\mu$ makes $d$ errors, $\cA(\mu) \le d |M| \le dn$. 
Moreover, for $(i,j) \in \cB(\mu)$, $i$ is one of the $d$ vertices misclassified by $\mu$ and there does not exist another $k \in [n]$ such that $(i,k) \in \cB(\mu)$. Hence $|\cB(\mu) | \le d$.

Using $\cA(\mu)$, $\cB(\mu)$, and $\cC(\mu)$, we have the following decomposition of $f(\mu)$:
\begin{align*}
f(\mu) & = \sum_{i \in M : \mu(i) \neq \pi_*(i)} \deg_{G_1 \land_{\mu} G_2} (i)  = \sum_{(i,j) \in \cA(\mu)} A_{ij} B_{\mu(i) \mu(j)} \\
& = \sum_{(i,j) \in \cB(\mu) } A_{ij} B_{\mu(i) \mu(j) } + \sum_{(i,j) \in \cC(\mu) } A_{ij} B_{\mu(i) \mu(j)} \\
& = 2 \sum_{(i,j) \in \cB(\mu) : i < j} A_{ij} B_{\mu(i) \mu(j)} + \sum_{(i,j) \in \cC(\mu)} A_{ij} B_{\mu(i) \mu(j)}
\end{align*}
For brevity, we denote the first summation by $X_{\cB}$ and the second by $X_{\cC}$.
It turns out that $X_{\cB}$ and $X_{\cC}$ are independent.
Indeed, observe that $A_{ij}$ and $B_{\mu(a) \mu(b)}$ are correlated if and only if $\{ \mu(a), \mu(b) \} = \{ \pi_*(i), \pi_*(j) \}$. By the definition of $\cB(\mu)$, we have that $\{ \mu(i), \mu(j) \} = \{ \pi_*(i), \pi_*(j) \}$ for $(i,j) \in \cB(\mu)$, and it follows that the terms of the summation of $X_{\cB}$ are independent of the terms of the summation of $X_{\cC}$.
This same argument shows that the summands comprising $X_{\cB}$ are also independent, with $A_{ij} B_{\mu(i) \mu(j)} = A_{ij} B_{\pi_{*}(i) \pi_{*}(j)}\sim \mathrm{Bern}(p_{ij} s^2)$. It follows that 
\begin{equation}
\label{eq:XB_stochastic_domination}
X_{\cB} \preceq \mathrm{Bin}( | \cB(\mu) | , p_{\max} s^2 ) \preceq \mathrm{Bin} ( d, p_{\max} s^2 ),
\end{equation}
where, in the second stochastic domination, we have used that $|\cB(\mu) | \le d$.

Handling $X_\cC$ is more complicated, since the corresponding summands may be correlated. To get around this issue, we partition $\cC(\mu)$ into $\cC_1(\mu)$, $\cC_2(\mu)$, and $\cC_3(\mu)$, and define 
\[
X_{\cC_m} : = \sum_{(i,j) \in \cC_m(\mu): i < j} A_{ij} B_{\mu(i) \mu(j) }, \qquad m \in \{1,2,3\}.
\]
Notice in particular that $X_{\cC} \le 2 (X_{\cC_1} + X_{\cC_2} + X_{\cC_3})$, 
where the factor of $2$ accounts for the possibility that $(i,j)$ and $(j,i)$ are both elements of~$\cC(\mu)$.

Crucially, we will choose the partition so that for each $m \in \{1,2,3 \}$, $X_{\cC_m}$ is a sum of independent Bernoulli random variables. To this end, consider two unordered pairs $\{i,j \}$ and $\{a,b \}$ in ${M \choose 2}$. The random variables $A_{ij} B_{\mu(i) \mu(j)}$ and $A_{ab} B_{\mu(a) \mu(b)}$ are dependent if and only if one of the following two conditions hold:
\begin{align}
\label{eq:edge_cond_1}
\{ \mu(i) , \mu(j) \} & = \{ \pi_*(a), \pi_*(b) \}, \\
\label{eq:edge_cond_2}
\{ \mu(a), \mu(b) \} & = \{ \pi_*(i), \pi_*(j) \}.
\end{align}
Let us now construct a dependency graph $H$ on the vertex set $V(H) : = \{ \{ i,j \} : (i,j) \in \cC(\mu) \}$ such that $\{i,j \}, \{a,b \} \in V(H)$ have an edge between them if and only if $A_{ij} B_{\mu(i) \mu(j) }$ and $A_{ab} B_{\mu(a) \mu(b)}$ are correlated. Now, since each vertex in $H$ has at most two neighbors in light of the conditions \eqref{eq:edge_cond_1} and \eqref{eq:edge_cond_2}, $H$ is 3-colorable. Letting $\cC_1, \cC_2, \cC_3$ be the partition of $V(H)$ corresponding to the three colors, it follows that $X_{\cC_1}, X_{\cC_2}, X_{\cC_3}$ are sums of independent random variables as desired. 

We proceed by studying the distributions of $X_{\cC_1}, X_{\cC_2}$, and $X_{\cC_3}$. For each $(i,j) \in \cC(\mu)$, we have from the definition of $\cC(\mu)$ that $\{\pi_*(i), \pi_*(j) \} \neq \{ \mu(i), \mu(j) \}$, hence 
\[
A_{ij} B_{\mu(i) \mu(j) } \sim \mathrm{Bern}( p_{ij} p_{ \pi_*^{-1}(\mu(i)), \pi_*^{-1}(\mu(j))} s^2).
\]
In particular, we have that $A_{ij} B_{\mu(i) \mu(j) } \preceq \mathrm{Bern}(p_{\max}^2 s^2)$. Moreover, for $m \in \{1,2,3 \}$,
\begin{equation}
\label{eq:XC_stochastic_domination}
X_{\cC_m} \preceq \mathrm{Bin} ( | \cC_m(\mu)|, p_{\max}^2 s^2 ) \preceq \mathrm{Bin} ( dn, p_{\max}^2 s^2 ),
\end{equation}
where, in the second stochastic domination, we have used that $| \cC_m(\mu) | \le | \cC(\mu) | \le | \cA(\mu) | \le dn$.

We can now bound the probability of interest as follows:
\begin{align*}
\p& ( f(\mu) \ge kd )  \le \p ( 2 X_{\cB} + X_{\cC} \ge kd ) \\
& \stackrel{(a)}{\le} \p ( 2 ( X_{\cB} + X_{\cC_1} + X_{\cC_2} + X_{\cC_3} ) \ge kd ) \\
& \stackrel{(b)}{\le} \sum_{m = 1}^3 \p ( 2 X_{\cB} + 6 X_{\cC_m} \ge kd ) \\
& \stackrel{(c)}{\le} \sum_{m = 1}^3 e^{- \theta kd } \E \left[ e^{2 \theta X_{\cB}} \right] \E \left[ e^{6 \theta X_{\cC_m}} \right] \\
& \stackrel{(d)}{\le} 3 e^{- \theta kd} ( 1 + p_{\max} s^2 (e^{2 \theta } - 1) )^d ( 1 + p_{\max}^2 s^2 ( e^{6 \theta } - 1) )^{dn} \\
& \stackrel{(e)}{\le} 3 \exp \left \{ - d ( \theta k - e^{2 \theta } p_{\max} s^2 - n e^{6 \theta} p_{\max}^2 s^2 ) \right \}.
\end{align*}
In the display above, $(a)$ uses that $X_{\cC} \le 2 ( X_{\cC_1} + X_{\cC_2} + X_{\cC_3})$;
$(b)$ is due to a union bound;
$(c)$ follows for $\theta > 0$ from a Chernoff bound and the independence of $X_{\cB}$ and $X_{\cC_m}$;
$(d)$ is obtained by bounding the moment generating functions of $X_{\cB}$ and $X_{\cC}$, using that $X_{\cB}$ and $X_{\cC_m}$ can be stochastically dominated by binomial random variables (see \eqref{eq:XB_stochastic_domination} and \eqref{eq:XC_stochastic_domination});
$(e)$ uses the inequality $(1 + x)^t \le e^{tx}$.
\end{proof}

We are now ready to prove Lemma~\ref{lemma:k_core_correctness}, which is the main goal of this section.

\begin{proof}[Proof of Lemma~\ref{lemma:k_core_correctness}]
Set $\theta : = c \log n$ for some $2/k < c < (2 \alpha - 1) / 6$. 
Such a $c$ exists due to the assumption that $k > 12/(2\alpha - 1)$. 
With this choice of $\theta$, we have that
\[
e^{2 \theta}  p_{\max}s^2 \le s^2 n^{- ( \alpha + 1)/3 + o(1)} = o(1).
\]
We also have that 
\[
n e^{6 \theta}  p_{\max}^2s^2 \le s^2 n^{6c - (2 \alpha - 1) + o(1)} = o(1),
\]
which follows since $6c < 2 \alpha - 1$. Lemma~\ref{lemma:f_probability_bound} now shows that 
\[
\p( f(\mu) \ge kd ) \le 3 \exp \left \{ - d \left( ck \log n - o(1) \right) \right \},
\]
which in turn implies that $\xi \le 3 e^{- ck \log n + o(1)}$. 
Since we chose $c$ such that $ck > 2$, we have that $\xi = o(n^{-2})$, 
which proves the desired result in light of Lemma~\ref{lemma:generic_correctness}.
\end{proof}

\section{Proofs of corollaries}\label{sec:proofs_corollaries}

Throughout this section, we denote $\dmin : = \min_{i \in [n]} \sum_{j = 1}^n p_{ij}$.

\subsection{Proofs for exact graph matching}

\begin{proof}[Proof of Corollary~\ref{cor:sbm_exact}]
Let $a \in [m]$, and suppose that $i \in V_a$. For $n$ sufficiently large, it holds that
\begin{multline}
\label{eq:dmin_sbm}
\dmin s^2 = \sum_{b = 1}^m |V_b| q_{ab} s^2 - q_{aa}s^2 \\
\ge (1 + \epsilon ) \log n - 1 \ge (1 + \epsilon/2) \log n. 
\end{multline}
In the first inequality above, we have used that $q_{aa}s^2 \le 1$.
An application of Theorem \ref{thm:exact_recovery} proves the corollary.
\end{proof}

\begin{proof}[Proof of Corollary~\ref{cor:cl_exact}]
We start by showing that $p_{\max} \le n^{- \alpha + o(1)}$ if \eqref{eq:chung_lu_exact_recovery_1} holds.
For any $i,j \in [n]$,
\begin{equation}
\label{eq:chung_lu_probability_bound}
\frac{w_i w_j }{\sum_{i = 1}^n w_i } \le  \left( \frac{\max_{i \in [n]} w_i}{ \sqrt{\sum_{i = 1}^n w_i }} \right)^2 \le n^{- \alpha + o(1)},
\end{equation}
where the final bound is a consequence of \eqref{eq:chung_lu_exact_recovery_1}.
In particular, since $p_{ij} = w_i w_j / ( \sum_{k = 1}^n w_k )$ for distinct $i,j \in [n]$, it follows that $p_{\max} \le n^{- \alpha + o(1)}$.
It remains to lower bound the minimum expected degree. For any $i \in [n]$, it holds for $n$ sufficiently large that
\begin{align}
\label{eq:dmin_chung_lu}
\dmin s^2 &  = \min_{i \in [n]} \sum_{j \in [n] : j \neq i} \frac{ w_i w_j s^2 }{\sum_{k = 1}^n w_k } \nonumber \\
& \ge \min_{i \in [n]} \sum_{j = 1}^n \frac{w_i w_j s^2}{\sum_{k = 1}^n w_k } - p_{\max} s^2 \nonumber \\
& \ge \min_{i \in [n]} w_i s^2 - n^{ - \alpha + o(1)} \ge (1 + \epsilon /2) \log n.
\end{align}
In the display above, the first inequality uses \eqref{eq:chung_lu_probability_bound}.
The desired result now follows from Theorem \ref{thm:exact_recovery}.
\end{proof}

\begin{proof}[Proof of Corollary~\ref{cor:rgg_exact}]
From \eqref{eq:rg_probability}, it is evident that $p_{\max} \leq p$, hence $p_{\max} \le n^{- \alpha + o(1)}$. The desired result follows by writing out the condition~\eqref{eq:expected_degree_condition} from Theorem \ref{thm:exact_recovery} in this setting. 
\end{proof}

\subsection{Proofs for partial graph matching}

\begin{proof}[Proof of Corollary~\ref{cor:sbm_partial}]
Since $\dmin \ge (n - 1) q_{\min}$, we have that $R \le q_{\max} / q_{\min}$. Furthermore, we have that
\begin{multline*}
\dmin s^2 = \min_{a \in [m]} \sum_{b = 1}^m |V_b| q_{ab} s^2 - o(1) \\
\ge  \max \left \{ \frac{q_{\max}}{q_{\min}} , 154 \right \} \ge \max \{ R, 154 \},
\end{multline*}
where the equality in the display above follows from \eqref{eq:dmin_sbm}.
The desired result follows by applying Theorem \ref{thm:partial_matching}.
\end{proof}

\begin{proof}[Proof of Corollary~\ref{cor:cl_partial}]
We start by upper bounding $R$ as follows:
\[
R \le \frac{p_{\max}}{p_{\min}} \le \left( \frac{w_{\max}^2}{\sum_{k = 1}^n w_k } \right) \left( \frac{ w_{\min}^2}{\sum_{k = 1}^n w_k} \right)^{-1} = \left( \frac{ w_{\max}}{w_{\min}} \right)^2.
\]
Furthermore, we have that
\begin{align*}
\dmin s^2 & = w_{\min} s^2 - o(1) \\
& \ge  \max \left \{ \left( \frac{w_{\max}}{w_{\min}} \right)^2 , 154 \right \}  \\
& \ge \max \left \{ R, 154 \right \},
\end{align*}
The desired result now follows from applying Theorem \ref{thm:partial_matching}.
\end{proof}

\begin{proof}[Proof of Corollary \ref{cor:rgg_partial}]
As a shorthand, denote $D : = \min_{i \in [n]} | \{ j \in [n] \setminus \{i \}: \| x_i - x_j \| \le r \} |$.
Noting that $p_{\max} \le p$ for this model, we have the upper bound $R \le n / D$. Noting that $\dmin = p D$, the conditions of Theorem \ref{thm:partial_matching} are satisfied if $ps^2 D \ge \max \{ n/D, 154 \}$, or equivalently, if $ps^2 D \ge \max \{ s \sqrt{np}, 154\}$.
Finally, since $p s^2 D = \min_{i \in [n]} | \{ j \in [n] : \| x_i - x_j \| \le r \}| - o(1)$, the desired result readily follows.
\end{proof}

\section{Proofs of lemmas on degrees}\label{sec:proofs_lemmas_degrees}

\begin{proof}[Proof of Lemma~\ref{lemma:Zk_expectation}]
We begin by writing $\E [ |Z_k| ] = \sum_{i = 1}^n \p ( \deg_G(i) \le k )$; we proceed by bounding the terms of the summation. For any $i \in [n]$, we have the distributional representation $\deg_G(i) \stackrel{d}{=} \sum_{j = 1}^n Y_j$, where the $Y_j$'s are independent and $Y_j \sim \mathrm{Bern}(p_{ij})$.
For $\theta > 0$, it holds by Markov's inequality that
\begin{align*}
\p & ( \deg_{G}(i) \le k )  \le e^{\theta k} \E \left[ \exp ( - \theta \deg_{G}(i) ) \right] \\
& = e^{\theta k} \prod_{j = 1}^n \E [ \exp ( - \theta Y_j )] = e^{\theta k} \prod_{j = 1}^n \left( 1 + p_{ij}  (e^{- \theta} - 1) \right) \\
& \le \exp \left \{ \theta k - (1 - e^{- \theta}) \sum_{j = 1}^n p_{ij}  \right \}.
\end{align*}
The desired claim follows from setting $\theta = - \log (1 - c)$. 
\end{proof}

\begin{proof}[Proof of Lemma~\ref{lemma:Zk_concentration}]
Our strategy is to bound the variance of $|Z_k|$ and apply Chebyshev's inequality. Let $E_i := \mathbf{1}\left( \deg_G(i) \le k\right)$, so that $|Z_k| = \sum_{i = 1}^n E_i$, and
\[
\mathrm{Var}(| Z_k | ) = \sum_{i = 1}^n \mathrm{Var}(E_i) + \sum_{i,j \in [n] : i \neq j} \mathrm{Cov}(E_i, E_j).
\]
As $E_i$ is an indicator variable, we can bound $\mathrm{Var}(E_i) \le \E [ E_i ]$, which allows us to bound the first summation by $\E [ | Z_k | ]$. 
We now turn to the terms of the second summation. Defining the indicator variable $A_{ij} : = \mathbf{1}( (i,j) \in E(G))$, 
we have that 
\begin{align}
\E [ E_i E_j ] & = \E \left[ \E [ E_i \vert A_{ij} ] \E [ E_j \vert A_{ij}] \right] \nonumber \\
& \stackrel{(a)}{\le} p_{ij} + (1 - p_{ij}) \E [ E_i \vert A_{ij} = 0] \E [ E_j \vert A_{ij} = 0] \nonumber  \\
& \stackrel{(b)}{\le} p_{ij} + (1 - p_{ij} ) \frac{ \E [ E_i ] \E [ E_j ] }{ (1 - p_{ij} )^2} \nonumber \\
\label{eq:Ei_product_expectation}
& \le p_{ij} + ( 1 - p_{ij} )^{-1} \E [ E_i ] \E [ E_j ].
\end{align}
Above, $(a)$ follows since $A_{ij} = 1$ with probability $p_{ij}$ and $|E_i|, |E_j | \le 1$; 
$(b)$ uses the relation
\[
\E [ E_i \vert A_{ij} = 0] = \frac{ \p ( E_i = 1, A_{ij} = 0 ) }{ \p ( A_{ij} = 0)} \le \frac{\E [ E_i ] }{1 - p_{ij} }.
\]
Using \eqref{eq:Ei_product_expectation}, we can bound $\mathrm{Cov}(E_i, E_j)$ as 
\begin{align}
\mathrm{Cov}(E_i, E_j ) & = \E [ E_i E_j ] - \E [ E_i ] \E [ E_j ] \nonumber \\
& \le p_{ij} + \left ( (1 - p_{ij} )^{-1} -1  \right) \E [ E_i ] \E [ E_j ] \nonumber \\
& \le p_{ij} + 2 p_{ij} \E[ E_i ] \E [ E_j ] \nonumber \\
\label{eq:Ei_cov_bound}
& \le 3 p_{ij} \le 3 p_{\max}.
\end{align}
Above, the inequality on the third line holds since $(1 - x)^{-1} - 1 = x/(1-x)\le 2x$ for $x$ sufficiently small, and the first inequality on the third line holds since $0 \le E_i, E_j \le 1$.
Putting everything together, it follows that 
\begin{equation}\label{eq:variance_estimate}
\mathrm{Var}( | Z_k | ) \le \E [ | Z_k | ] + 3  n^2 p_{\max} = o(n^{3/2}),
\end{equation}
where the first inequality is due to the covariance bound in~\eqref{eq:Ei_cov_bound}. The final $o(n^{3/2})$ bound follows since $|Z_k| \le n$ and we assumed that $p_{\max} = o(1/\sqrt{n})$.
With the variance estimate~\eqref{eq:variance_estimate} in hand, Chebyshev's inequality implies 
the claim. 
% \[
% \p \left( \left| |Z_k| - \E [ |Z_k|] \right| \ge \frac{1}{3} n^{3/4} \right) \le \frac{ \mathrm{Var}( |Z_k| ) }{n^{3/2}}  = o(1).
% \]
\end{proof}

\section{Proof of the \Luczak expansion lemma}\label{sec:Luczak}

\begin{proof}[Proof of Lemma~\ref{lemma:k_core_size_1}]
The proof follows using similar methods as \Luczak~\cite{Luczak1991}.
Define a sequence of subsets of vertices $\{ U_m \}_{m \ge 0}$ as follows. Let $U_0 := Z_{k+1}$. For $m \ge 0$, if there exists $v \in V \setminus U_m$ with at least $3$ neighbors in $U_m$, then let $U_{m + 1} : = U_m \cup \{ v \}$. If no such vertex exists, we stop the construction. 

Let $U_\ell$ be the final set of the construction, and let us assume by way of contradiction that $| U_\ell| > 3 | U_0|$. Then there must exist $0 \le b \le \ell$ such that $|U_b| = 3 | U_0|$, as only a single vertex is added per iteration of the construction. Letting $E_b$ denote the number of edges in $G \{ U_b \}$, we have that 
\[
E_b \ge 3 ( | U_b | - | U_0 | ) = 2 | U_b|.
\]
Since $| U_b | = 3 | U_0 | \le \frac{3}{4 \gamma^2 } n$, the display above contradicts the event in Lemma \ref{lemma:subgraph_density}. Hence $| U_\ell | \le 3 | U_0 | = 3 |Z_{k+1}|$. 

Finally, we connect this analysis to the $k$-core of $G$ by noting that, if $v \in V \setminus U_\ell$, then $\deg_G(v) \ge k + 2$ and $v$ has at most $2$ neighbors in $U_\ell$. As a result, the minimum degree in $G \{ V \setminus U_\ell \}$ is at least $k$, 
so 
$%[n] \setminus \core_k(G) 
F_{k} \subseteq U_\ell$.
\end{proof}

\end{document}